\documentclass[]{scrartcl}
\usepackage[T1]{fontenc}
\usepackage[utf8]{inputenc}
\usepackage{amssymb}

\usepackage{amsmath,amsthm,amsfonts,amssymb}  
\usepackage{mathtools}
\usepackage{graphicx}
\usepackage{hyperref}
\usepackage{cleveref}
\usepackage{xcolor}

\newtheorem{theorem}{Theorem}[section]

\crefname{Proposition}{proposition}{propositions}
\Crefname{Proposition}{Proposition}{Propositions}

\crefname{Corollary}{corollary}{Corollaries}
\Crefname{Corollary}{Corollary}{Corollaries}

\crefname{Result}{result}{results}
\Crefname{Result}{Result}{Results}

\newtheorem{lemma}{Lemma}[section]
\crefname{Lemma}{lemma}{lemmas}
\Crefname{Lemma}{Lemma}{Lemmas}

\crefname{Remark}{remark}{remarks}
\Crefname{Remark}{Remark}{Remarks}

\newtheorem{definition}{Definition}[section]
\crefname{Definition}{definition}{definitions}
\Crefname{Definition}{Definition}{Definitions}

\title{Local times of self-intersection and sample path properties of Volterra Gaussian processes} 

\author{%
Olga~Izyumtseva
and 
Wasiur R. KhudaBukhsh\hspace{0.5mm}\href{https://orcid.org/0000-0003-1803-0470}{\includegraphics[width=3mm]{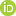}}
}



\newcommand{\Cov}{\mathsf{Cov}}
\newcommand{\Var}{\mathsf{Var}}
\newcommand{\E}{\mathsf{E}}
\newcommand{\Eof}[1]{\E\left(#1\right)}
\newcommand{\prob}{\mathsf{P}}
\newcommand{\probOf}[1]{\prob\left(#1\right)}

\newcommand{\ve}{\varepsilon}
\newcommand{\mbR}{{\mathbb R}}
\newcommand{\mbN}{{\mathbb N}}

\newcommand{\llim}[1]{\mathop{L_2\text{-}\lim}\limits_{#1}\,}
\newcommand{\myExp}[1]{\exp\left(#1\right)}
\newcommand{\differential}[1]{\mathrm{d}#1}
\newcommand{\inner}[2]{\left\langle #1, #2 \right\rangle}
\newcommand{\norm}[1]{\| #1 \|}


\begin{document}
\maketitle

\begin{abstract}
  We study a Volterra Gaussian process of the form $X(t)=\int^t_0K(t,s)\differential{W(s)},$ where $W$ is a Wiener process and $K$ is a continuous kernel. In dimension one, we prove a law of the iterated logarithm, discuss  the existence of local times and verify a continuous dependence between the local time and the kernel that generates the process. Furthermore, we prove the existence of the Rosen renormalized self-intersection local times for a planar Gaussian Volterra process.
\end{abstract}

\section{Introduction}
\label{sec:introduction}
Volterra Gaussian processes form an important class of stochastic processes with a wide range of applications. For example, they have been used for modelling velocity fields in turbulence, growth of cancer tumours \cite{BarndorffNielsen2007Ambit}, \cite{BarndorffNielsen2004Spatiotemporal}, and energy markets \cite{BarndorffNielsen2014Electricity,BarndorffNielsen2013Energy}. They also appear in the literature on infectious disease epidemiology as the limit of appropriately scaled population counts, often obtained by means of Functional Central Limit Theorems (FCLTs)  \cite{KhudaBukhsh2022FCLT,Pang2022FunctionalNonMarkovian,Pang2023Functional}. Volterra Gaussian processes were introduced by P. L\'evy in the 1950s. In \cite{Levy1956Special}, L\'evy presented the canonical representation for a given Gaussian process $X$, which is an integral representation of the process in terms of a Wiener process $W$ and a non-random Volterra kernel $K$ such that for each $t$ the represantation
\begin{equation}
\label{eq0.1}
X(t)=\int^t_0K(t,s)\differential{W(s)}
\end{equation}
holds strongly and the processes  $X$ and $W$ generate the same filtration.  L\'evy showed that the canonical representation of any Gaussian process is uniquely determined if it exists. T. Hida in \cite{Hida1960Canonical} systematically established L\'evy's theory  as well as proved some necessary and sufficient conditions for the existence of the canonical representation. Ito further provided conditions for the existence and the uniqueness of solutions of stochastic differential equations (SDEs) of the form \eqref{eq0.1} in \cite{Ito1979existence}.

The most well known examples of Volterra Gaussian processes are the Wiener process with  $K(t,s)=1_{[0,t]}(s),$ the Brownian bridge with $K(t,s)=(I-P)1_{[0,t]}(s),$ where $I$ is the identity operator and $P$ is a projection onto the linear subspace generated by the function $1_{[0,1]}$, the Ornstein--Uhlenbeck process defined on the real line with $K(t,s)=e^{-(t-s)}1_{[-\infty,t]}(s)$, and
the fractional Brownian motion with Hurst index $H\in (0,1)$ for which $K(t,s)$ is defined as follows
\begin{align*}
  K(t,s) &{} = \begin{cases}
    a_Hs^{\frac{1}{2}-H}\int^t_s(u-s)^{H-\frac{3}{2}}u^{H-\frac{1}{2}}\differential{u} & \text{when}\   H>\frac{1}{2},\\
    b_H\left[\left(\frac{t}{s}\right)^{H-\frac{1}{2}}(t-s)^{H-\frac{1}{2}}-\left(H-\frac{1}{2}\right)s^{\frac{1}{2}-H}\int^t_s(u-s)^{H-\frac{1}{2}}u^{H-\frac{3}{2}}\differential{u}\right]& \text{when}\  H<\frac{1}{2},
    \end{cases}     
\end{align*}
for $t>s$ (with $K(t,s)=0$ otherwise), and 
for some positive constants $a_H,\ b_H$. See also \cite{Biagini2008Stochastic,Pardoux1990Stochastic} for more details and other interesting examples.  Volterra-type Ornstein--Uhlenbeck processes were studied in \cite{Pham2018Volterra}. 
%
Volterra-type Gaussian processes are also closely related to the problem of optimal linear nonstationary filtering problem elaborately investigated by Kalman and Bucy (see \cite[Chapter 10]{Liptser1977Statistics}, \cite[Chapter 6]{Oksendal2003Stochastic}, \cite[Chapter 9]{Xiong2008introduction}).

Our aim in this paper is to study the asymptotic and geometric properties of Volterra Gaussian processes for which we provide a precise definition below. Let  $W(t),\ t\in [0,1]$, be a one-dimensional Wiener process. 

\begin{definition}
\label{def:Volterra} A centred Gaussian process $X(t),\ t\in[0,1]$, is called a Volterra Gaussian process, if for each $t\in(0,1]$, it admits the representation
$$
X(t)\stackrel{law}{=}\int^t_0K(t,s)\differential{W(s)},
$$
where $K:[0,1]^2\to\mbR$ is a Volterra kernel, i.e., 
$K(t,s)=0$  for all $s>t$, and $K$ satisfies 
$$
\sup_{t\in[0,1]}\int^t_0K(t,s)^2 \differential{s}<\infty. 
$$
\end{definition}

\subsection{Our contributions}
In this paper,  we consider Volterra Gaussian processes generated by Volterra kernels of the form 
$K(t,s)1_{[0,t]}(s),\ t,s\in[0,1],$ where $K$ is a continuous function on $[0,1]^2,$ continuously differentiable with respect to $t$ and $K(0,0)\neq 0.$ This is interesting since the Volterra Gaussian process then in some sense behaves similar to the Wiener process and inherits asymptotic and geometric properties of the Wiener process. The essential difference is that in general Volterra Gaussian processes are neither Markov processes nor martingales. Hence, the commonly used tools for studying asymptotic and geometric characteristics of Wiener processes are not applicable to the Volterra Gaussian processes we consider.  Our approach is based on the white noise representation of Gaussian processes. It allows us to study functionals of white noise via its Fourier-Wiener transform.

We prove that Volterra Gaussian processes generated by continuously differentiable kernels are integrators (in Section \ref{sec:Gaussianintegrators}). We then establish the law of the iterated logarithm (in Section \ref{sec:LIL}). Finally, we  discuss the existence of local times and prove the existence of the Rosen renormalized self-intersection local times for a planar Volterra Gaussian process in Section \ref{sec:rosen_local_times}. 

Even though we present the results for  a one-dimensional Volterra Gaussian process, the methods we use can be extended to higher dimensions in a fairly straightforward manner, albeit with more tedious notations. Since the results for higher dimensions do not seem to bring significantly new mathematical insights, we do not present them here and focus solely on the one-dimensional case for the sake of simplicity. 

For the purpose of studying local times and self-intersection local times of Volterra Gaussian processes, we apply the  approach developed in \cite{Izyumtseva2014Onthelocaltime,Izyumtseva2016moments,Dorogovtsev2015Properties,Dorogovtsev2011regularization}. Since Volterra Gaussian processes are often used in models of infectious disease epidemiology to approximate appropriately scaled population counts  in the limit of a large configuration model random graph \cite{KhudaBukhsh2022FCLT} or a well mixed population \cite{Pang2022FunctionalNonMarkovian,Pang2023Functional}, local times provide a useful means to assess the quality of such approximations since they can be used to study how much time the process spends near zero. Therefore, we believe our results are not only interesting from a theoretical perspective, but also could be useful in applications.  We refer the reader to the excellent books \cite{Marcus1999Renormalized,Marcus2006MarkovProcesses} for a detailed discussion on local times and renormalized self-intersection local times for Gaussian processes.

\subsection{Notational conventions}
Throughout this paper, we will use $X(t),\ t\in[0,1],$ to denote a one-dimensional Volterra Gaussian process generated by a kernel $K(t,s)1_{[0,t]}(s),\ t,s\in[0,1],$ as specified in Definition~\ref{def:Volterra}. We will use $x(t),\ t\in[0,1]$ to denote a general  Gaussian process. We will use $W(t),\ t\in[0,1],$ to denote a one-dimensional Wiener process, while $\mathcal{W}(t),\ t\in[0,1],$ will denote a planar Wiener process. We will use $\xi$ to denote a white noise in $L_2([0,1])$ generated by $W$ and we will use $\xi_1,\ \xi_2,$ to denote independent white noises in $L_2([0,1])$ generated by coordinates $W_1(t),\ W_2(t),\ t\in[0,1]$ of $\mathcal{W}(t),\ t\in[0,1].$   See Appendix~\ref{sec:white_noise} for the definition of white noise. We will use $\E$ to denote the expectation operator. We will use the notation $\llim{n\to\infty}$ to denote the limit as $n\to \infty$ in the $L_2$-sense. Inner products in a Hilbert space $H$ will be denoted by $\inner{\cdot}{\cdot}$. The norm of a vector will be denoted by $\norm{\cdot}$. The space of continuous functions from $A$ to $B$ will be denoted by $C(A,B)$. Given $e_1,\ldots,e_n$, elements of a Hilbert space $H$, we will use notation $\tilde{e}_1,\ldots,\tilde{e}_n$ for the orthogonal system of elements in $H$ obtained  from the elements $e_1,\ldots, e_n$  via the Gram--Schmidt procedure. The set of real numbers will be denoted by $\mbR$ and the set of natural numbers, by $\mbN$. Set $\mbN_0 = \mbN \cup \{0\}$. 

\section{Gaussian integrators}
\label{sec:Gaussianintegrators}



Gaussian integrators were introduced by A. Dorogovtsev in \cite{Dorogovtsev1998Stochastic}. They are called integrators because stochastic integral for a function from $L_2([0,1])$ with respect to the integrator is well defined. The original definition of an integrator is the following. 

\begin{definition}[Gaussian integrator \cite{Dorogovtsev1998Stochastic}]
\label{def:2} Let $x(t),\ t\in[0,1]$ be a centred Gaussian process with $x(0)=0.$  If there exists $c>0$ such that   the following estimate holds
\begin{equation}
\label{eq1.1}
\E\Big(\sum^{n-1}_{k=0}a_k(x(t_{k+1})-x(t_k))\Big)^2\leq c\sum^{n-1}_{k=0}a^2_k(t_{k+1}-t_k),
\end{equation}
for any $n\geq1$, and scalars $\ a_0, \ldots, a_{n-1}\in\mbR,$ $0=t_0<t_1<\ldots<t_n=1$,
then the process $x$ is said to be an integrator.
\end{definition} 
It can be verified that Wiener processes, Brownian bridges, and fractional Brownian motions with Hurst index $H\geqslant \frac{1}{2}$ are integrators. An integrator is associated with a continuous linear operator in $L_2([0,1])$  by means of a white noise representation in the same space (see Appendix~\ref{sec:white_noise} for the definition of white noise and Lemma \ref{lem:1} in  Appendix~\ref{sec:integrators}).

Consider a Volterra Gaussian process $X(t)$, $t \in [0, 1]$, as defined in Definition~\ref{def:Volterra}. Let $\xi$ be a white noise in $L_2([0,1])$ generated by $W$.  Then, it is easy to see that the Volterra Gaussian process $X$ can be represented as 
$$
X(t)=\inner{K(t,\cdot)1_{[0,t]} }{ \xi},\ t\in[0,1], 
$$
where $\inner{\cdot}{\cdot}$ denotes the inner product in $L_2([0,1])$ (see Appendix~\ref{sec:white_noise}). 
The next statement describes conditions on the kernel $K$ under which a  Volterra Gaussian process $X$  is an integrator. 

\begin{theorem}
\label{thm:integrator} Assume that the kernel $K(t,s),\ t,s\in[0,1]$ is continuous and continuously differentiable with respect to $t$. Then, the process $X$ is an integrator.
\end{theorem}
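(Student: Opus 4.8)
The plan is to verify the integrator inequality \eqref{eq1.1} directly, estimating the quadratic form $\E\big(\sum_{k=0}^{n-1} a_k (X(t_{k+1}) - X(t_k))\big)^2$ in terms of $\sum_k a_k^2 (t_{k+1}-t_k)$. Writing $X(t) = \inner{K(t,\cdot)1_{[0,t]}}{\xi}$, the increment over $[t_k, t_{k+1}]$ is $\inner{g_k}{\xi}$ with $g_k(s) = K(t_{k+1},s)1_{[0,t_{k+1}]}(s) - K(t_k,s)1_{[0,t_k]}(s)$, so the left-hand side equals $\norm{\sum_k a_k g_k}_{L_2([0,1])}^2$. Thus the whole problem reduces to a deterministic $L_2$-estimate: show that $\norm{\sum_k a_k g_k}^2 \le c \sum_k a_k^2 (t_{k+1}-t_k)$ for a constant $c$ depending only on $K$.

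The key step is to split $g_k$ into two pieces reflecting the two ways $X$ accumulates mass: the ``diagonal'' contribution on $[t_k, t_{k+1}]$, namely $K(t_{k+1},s)1_{(t_k,t_{k+1}]}(s)$, and the ``memory'' contribution on $[0,t_k]$, namely $\big(K(t_{k+1},s) - K(t_k,s)\big)1_{[0,t_k]}(s)$. For the memory part I would use the fundamental theorem of calculus in the first variable: $K(t_{k+1},s) - K(t_k,s) = \int_{t_k}^{t_{k+1}} \partial_t K(u,s)\,\differential{u}$, which has size $O(t_{k+1}-t_k)$ uniformly in $s$ by continuity of $\partial_t K$ on the compact square $[0,1]^2$. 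Summing these over $k$, weighted by $a_k$, and applying Cauchy--Schwarz to the resulting double integral (distributing one factor of $(t_{k+1}-t_k)$ onto each $a_k$) should bound the memory part's contribution by $C\sum_k a_k^2 (t_{k+1}-t_k)$, with $C$ controlled by $\sup|\partial_t K|^2$. For the diagonal part, the supports $(t_k,t_{k+1}]$ are disjoint, so $\norm{\sum_k a_k K(t_{k+1},\cdot)1_{(t_k,t_{k+1}]}}^2 = \sum_k a_k^2 \int_{t_k}^{t_{k+1}} K(t_{k+1},s)^2\,\differential{s} \le \sup|K|^2 \sum_k a_k^2 (t_{k+1}-t_k)$ immediately. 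Combining the two bounds via $\norm{u+v}^2 \le 2\norm{u}^2 + 2\norm{v}^2$ yields \eqref{eq1.1}.

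The main obstacle I anticipate is the bookkeeping in the cross terms of the memory part: unlike the diagonal part, the functions $(K(t_{k+1},\cdot)-K(t_k,\cdot))1_{[0,t_k]}$ have heavily overlapping supports (all contain $[0,t_1]$), so one cannot just sum squared norms. The fix is to write the memory part as $\int_0^1 \big(\sum_k a_k 1_{[0,t_k]}(s)\int_{t_k}^{t_{k+1}}\partial_t K(u,s)\,\differential{u}\big)^2 \differential{s}$ and, for each fixed $s$, view the inner sum as $\int_0^1 \phi_s(u)\,\differential{u}$ where $\phi_s(u) = \sum_k a_k 1_{[0,t_k]}(s) 1_{(t_k,t_{k+1}]}(u)\,\partial_t K(u,s)$; then Cauchy--Schwarz in $u$ gives $\le \int_0^1 \phi_s(u)^2 \differential{u}$, and since the intervals $(t_k,t_{k+1}]$ partition $[0,1]$ this integral is exactly $\sum_k a_k^2 1_{[0,t_k]}(s)\int_{t_k}^{t_{k+1}} \partial_t K(u,s)^2\,\differential{u} \le \sup|\partial_t K|^2 \sum_k a_k^2 (t_{k+1}-t_k)$, and then integrating in $s$ (noting $1_{[0,t_k]}(s)$ only shrinks things) closes the estimate. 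So the disjointness of the $u$-intervals, rather than the $s$-supports, is what makes the cross terms manageable; once that observation is in place the rest is routine.
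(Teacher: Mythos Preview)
Your argument is correct. The diagonal/memory decomposition is clean, and the key observation---that in the memory part the disjointness lives in the $u$-variable rather than in the $s$-supports---is exactly what makes the Cauchy--Schwarz step go through. Once $\phi_s(u)$ is written as a sum over disjoint $u$-intervals, squaring it gives no cross terms and the estimate closes with constant $2\big(\sup|K|^2 + \sup|\partial_t K|^2\big)$.

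The paper takes a different route. Rather than attacking the step-function inequality \eqref{eq1.1} directly, it first proves a dual characterization (Lemma~\ref{lem:integrator_meansq_charactertization}): $x$ is an integrator iff $\E\big(\int_0^1 x(t)f'(t)\,\differential{t}\big)^2 \le c\int_0^1 f^2(t)\,\differential{t}$ for all $C^1$ test functions $f$ vanishing at the endpoints. This is obtained from \eqref{eq1.1} by Abel summation and a limit. The theorem is then proved by expanding $\E\big(\int_0^1 X(t)f'(t)\,\differential{t}\big)^2$ as a double integral in $(t_1,t_2)$ of the covariance times $f'(t_1)f'(t_2)$ and integrating by parts in each variable to transfer the derivatives from $f$ to $K$; the resulting four terms are each bounded pointwise by $C|f(t_1)||f(t_2)|$ or $Cf(t_2)^2$, so the Cauchy--Schwarz in $L_2(\differential{t_1}\differential{t_2})$ is trivial.

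What each approach buys: your direct argument is shorter and entirely self-contained---no auxiliary lemma is needed, and the constant is explicit. The paper's approach invests in a reusable characterization of integrators that may be of independent interest, and once that lemma is in hand the computation is purely a covariance calculation with no stochastic content. Structurally the two are dual: you decompose the \emph{increment} of $X$ and exploit disjointness in the time variable of $\partial_t K$; the paper keeps $X$ intact and moves the differentiation onto the test function via integration by parts, which is the continuous analogue of your Abel-type rearrangement.
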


We need the following lemma in order to prove Theorem~\ref{thm:integrator}. 
\begin{lemma}
  \label{lem:integrator_meansq_charactertization} A centred continuous in mean square Gaussian process $x(t),\ t\in[0,1]$ is an integrator if and only if there exists a constant $c>0$ such that for any continuously differentiable function $f$ on $[0,1]$ with $f(0)=f(1)=0$ the following relation holds
  \begin{equation}
  \label{eq1.4}
  \E\Big(\int^1_0x(t)f^{\prime}(t)\differential{t}\Big)^2\leqslant c\int^1_0f^2(t)\differential{t}.
  \end{equation}
\end{lemma}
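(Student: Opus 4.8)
The plan is to prove the two implications separately; in both directions the passage between the ``discrete'' bound \eqref{eq1.1} and the ``continuous'' bound \eqref{eq1.4} will be carried out by Abel summation combined with the elementary identity $f(t_{k+1})-f(t_k)=\int_{t_k}^{t_{k+1}}f'(t)\,dt$, and all limits will be controlled using that a mean-square continuous process on the compact interval $[0,1]$ is in fact uniformly continuous in mean square.

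For the direction ``integrator $\Rightarrow$ \eqref{eq1.4}'', I would fix a $C^1$ function $f$ with $f(0)=f(1)=0$ and a partition $0=t_0<\dots<t_n=1$, and consider the Riemann-type sum $S_n:=\sum_{k=0}^{n-1}x(t_k)\bigl(f(t_{k+1})-f(t_k)\bigr)$, which equals $\int_0^1 x(\eta_n(t))\,f'(t)\,dt$ with $\eta_n(t):=t_k$ for $t\in[t_k,t_{k+1})$. Summation by parts, using $f(t_0)=f(t_n)=0$, rewrites $S_n=-\sum_{k=0}^{n-1}\bigl(x(t_{k+1})-x(t_k)\bigr)f(t_{k+1})$, so that \eqref{eq1.1} applied with $a_k=-f(t_{k+1})$ gives $\E S_n^2\le c\sum_{k=0}^{n-1}f(t_{k+1})^2(t_{k+1}-t_k)$. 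Letting the mesh tend to $0$, the right-hand side converges to $c\int_0^1 f^2(t)\,dt$ as a Riemann sum of a continuous function, while the Cauchy--Schwarz estimate $\E\bigl(S_n-\int_0^1 x(t)f'(t)\,dt\bigr)^2\le\norm{f'}_\infty^2\bigl(\int_0^1\sqrt{\E(x(\eta_n(t))-x(t))^2}\,dt\bigr)^2$ shows $S_n\to\int_0^1 x(t)f'(t)\,dt$ in $L_2$, because $\sup_{t}\E(x(\eta_n(t))-x(t))^2\to0$ by uniform mean-square continuity. This yields \eqref{eq1.4} (in fact with the same constant $c$).

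For the direction ``\eqref{eq1.4} $\Rightarrow$ integrator'', I would fix a partition $0=t_0<\dots<t_n=1$ and scalars $a_0,\dots,a_{n-1}$, and build a sequence $(f_m)_{m\ge 1}$ of $C^1$ functions on $[0,1]$ with $f_m(0)=f_m(1)=0$ that approximate the step function $-\sum_{k=0}^{n-1}a_k1_{[t_k,t_{k+1})}$: take $f_m\equiv-a_k$ away from the nodes and let $f_m$ pass monotonically, through a fixed smooth profile, from one level to the next in a window of width $\sim1/m$ around each interior $t_k$ (and from $0$ to $-a_0$ on $[0,1/m]$, and from $-a_{n-1}$ to $0$ on $[1-1/m,1]$), arranged, for $m$ large, so that the windows are disjoint, $\norm{f_m}_\infty\le\max_k|a_k|$, and $\int_0^1|f_m'(t)|\,dt$ stays bounded in $m$. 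Since the transition windows have total length $O(1/m)$, we get $\int_0^1 f_m^2\to\sum_{k=0}^{n-1}a_k^2(t_{k+1}-t_k)$. On each window, $\int x(t)f_m'(t)\,dt$ equals $x(\tau)\int_{\mathrm{window}}f_m'(t)\,dt$ — where $\tau$ is the node of that window, with $\tau=0$ and $\tau=1$ for the two end windows — plus an error whose $L_2(\Omega)$-norm is at most $\bigl(\int_0^1|f_m'|\bigr)\,\sup_{|t-\tau|\le 1/m}\norm{x(t)-x(\tau)}_{L_2(\Omega)}\to0$; summing over the windows, the main terms reassemble, via Abel summation, into $\sum_{k=0}^{n-1}a_k\bigl(x(t_{k+1})-x(t_k)\bigr)$, so $\int_0^1 x(t)f_m'(t)\,dt$ converges to that quantity in $L_2(\Omega)$. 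Applying \eqref{eq1.4} to $f_m$ and passing to the limit then gives \eqref{eq1.1}.

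The identities from Abel summation and the two Riemann-sum limits are routine. I expect \textbf{the main obstacle} to be the convergence $\int_0^1 x(t)f_m'(t)\,dt\to\sum_k a_k(x(t_{k+1})-x(t_k))$ in the second direction: one must verify that the mass of $f_m'$ in each smoothing window genuinely localizes at its node $\tau$, and this is precisely the step that uses uniform mean-square continuity of $x$ in an essential way. A secondary technical point is keeping $\int_0^1|f_m'|$ bounded uniformly in $m$, which forces the transition profile to be monotone, and smoothing the corners of the otherwise piecewise-linear construction so that $f_m$ is genuinely $C^1$.
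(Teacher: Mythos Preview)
Your proposal is correct and follows essentially the same route as the paper: Abel summation to pass between the discrete sum and the integral, with limits justified via mean-square continuity in the first direction and approximation of step functions by $C^1$ test functions in the second. The paper is considerably terser (it uses only the uniform partition $\{k/n\}$ and dispatches the converse in a single sentence), whereas you spell out the smoothing construction and the node-localization argument, but there is no substantive difference in strategy.
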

\begin{proof}[Proof of Lemma~\ref{lem:integrator_meansq_charactertization}] 
 Let $\{\frac{k}{n},\ k=0,\ldots,n\}$ be a partition of the interval $[0,1].$   Assume that the  process $x$ is an integrator. It implies that for a function $f$ that satisfies the conditions of the lemma, the following estimate holds
  \begin{equation}
  \label{eq1.5}
  \E\Big(\sum^{n-1}_{k=0}f\Big(\frac{k}{n}\Big)\Big(x\Big(\frac{k+1}{n}\Big)-x\Big(\frac{k}{n}\Big)\Big)\Big)^2\leqslant c\sum^{n-1}_{k=0}f\Big(\frac{k}{n}\Big)^2\frac{1}{n}.
  \end{equation}
  Applying the Abel transformation
  $$
  \sum^n_{k=m}a_kb_k=a_nB_n-a_mB_{m-1}-\sum^{n-1}_{k=m}(a_{k+1}-a_k)B_k,
  $$
  where $n\geqslant m\geqslant 1,\ B_0=0,\ B_k=\sum^{k}_{i=1}b_i$ and $a_k,\ b_k,\ k\geqslant 1$ are two real sequences, we get 
  \begin{align*}
    \sum^{n-1}_{k=0}f\Big(\frac{k}{n}\Big)\Big(x\Big(\frac{k+1}{n}\Big)-x\Big(\frac{k}{n}\Big)\Big)=-\sum^{n-1}_{k=0}x\Big(\frac{k+1}{n}\Big)\Big(f\Big(\frac{k+1}{n}\Big)-f\Big(\frac{k}{n}\Big)\Big) \eqcolon - S_{x, f}^{(n)}. 
 \end{align*}  
 It is clear that $S_{x, f}^{(n)}$ converges to $S_{x, f} \coloneq \int^1_0x(t)f^{\prime}(t)\differential{t}$ in mean square as $n\to\infty.$ Hence, it follows from \eqref{eq1.5} that 
  \begin{align}
    \begin{aligned}
      \E\left( S_{x, f}^{(n)} -S_{x, f} +S_{x, f} \right)^2  
      &{} =  \E\left( S_{x, f}^{(n)} -S_{x, f} \right)^2 
       + 2 \E\left(S_{x, f}^{(n)} S_{x, f}\right)
      + \E\left(S_{x, f}^2\right) \\
      &{}\quad \leqslant c\sum^{n-1}_{k=0}f\left(\frac{k}{n}\right)^2\frac{1}{n}.
    \end{aligned}
    \label{eq1.6}
  \end{align}

  Passing to the limit as $n\to \infty$ in mean square in \eqref{eq1.6}, we  obtain \eqref{eq1.4}.  
  
  Approximating the step functions by functions that satisfy the conditions of the lemma, we can see that \eqref{eq1.4} implies that $x$ is an integrator.
  \end{proof}

We are now ready to prove Theorem~\ref{thm:integrator}.

\begin{proof}[Proof of Theorem~\ref{thm:integrator}]

Let $f:[0,1]\to\mbR$ be a continuously differentiable function with $f(0)=f(1)=0$. Then
\begin{align*}
    \E\left(\int^1_0X(t)f^{\prime}(t)\differential{t}\right)^2&{} =\int^1_0\int^1_0f^{\prime}(t_1)f^{\prime}(t_2)\int^{t_1}_0 K(t_1,s)K(t_2,s)1_{[0,t_2]}(s)\differential{s} \differential{t_1}\differential{t_2}\\
    &{} = -\int^1_0\int^1_0f^{\prime}(t_2)f(t_1)\int^{t_2}_0K^{\prime}(t_1,s)K(t_2,s)1_{[0,t_1]}(s) \differential{s}\differential{t_1}\differential{t_2} \\
    &{}\quad -\int^1_0f^{\prime}(t_2)\int^{t_2}_0f(t_1)K(t_1,t_1)K(t_2,t_1)\differential{t_1}\differential{t_2} \\
    &{} = \int^1_0\int^1_0f(t_1)f(t_2)\int^{t_1\wedge t_2}_0K^{\prime}(t_1,s)K^{\prime}(t_2,s) \differential{s}\differential{t_1}\differential{t_2}\\
    &{} \quad +\int^1_0\int^{t_1}_0f(t_1)f(t_2)K^{\prime}(t_1,t_2)K(t_2,t_2)\differential{t_2}\differential{t_1}\\
    &{} \quad+\int^1_0\int^{t_2}_0f(t_2)f(t_1)K(t_1,t_1)K^{\prime}(t_2,t_1)\differential{t_1}\differential{t_2}\\
      &{} \quad +\int^1_0f^2(t_2)K^{2}(t_2,t_2)\differential{t_2}\\
     &{} \leqslant c\int^1_0f^2(t)\differential{t},
\end{align*}
where we performed integration by parts first with respect to $t_1$ in the second line and then with respect to $t_2$ in the third line. Here $c$ is a positive constant. Invoking Lemma~\ref{lem:integrator_meansq_charactertization} we conclude that $X$ is an integrator.



\end{proof}
  
In the light of Lemma \ref{lem:1}  and Theorem \ref{thm:integrator},
if for a Volterra Gaussian process $X(t),\ t\in[0,1]$
the kernel $K(t,s),\ t,s\in[0,1]$ is continuous and  continuously differentiable with respect to $t$, then there exists a continuous linear operator $A$ on $L_2([0,1])$ such that
$$
X(t)=\inner{A1_{[0,t]}}{\xi},
$$
where $\xi$ is a white noise in $L_2([0,1])$ generated by the Wiener process $W.$ Moreover, for any $f\in L_2([0,1])$, the extended stochastic integral with respect to $X$ is well defined and is given by
$$
\int^1_0f(t)\differential{X(t)}=\int^1_0Af(t)\differential{W(t)}.
$$
Note that the kernels of Volterra Gaussian processes that generate integrators belong to the singular case of kernels described in \cite{Alos2001Stochastic} for which the stochastic calculus was developed.

\section{Law of the iterated logarithm}
\label{sec:LIL}
In this section, we will assume that  $K(t,s),\ t,s\in[0,1]$ is a continuous kernel, continuously differentiable with respect to $t$  with $K(0,0)\neq 0.$ 
It implies that there exists a positive constant $L$ such that 
\begin{equation}
\label{eq6.0}
\sup_{s\in[0,1]} |K(t_2, s) - K(t_1, s) | \leqslant L|t_2-t_1|,
\end{equation}
for any $t_1,t_2\in[0,1]$. 
Then, for $s<t,$ we have
$$
\E(X(t)-X(s))^2=\int^t_sK^2(t,r)\differential{r} +\int^s_0(K(t,r)-K(s,r))^2 \differential{r}.
$$
Hence it follows from \eqref{eq6.0} that there exists a positive constant $L_1$ such that
$$
\E(X(t)-X(s))^2\leqslant L_1|t-s|.
$$
Applying the Kolmogorov's continuity theorem, one can conclude that there exists a continuous modification of the process $X$. We are now ready to state and prove a law of the iterated logarithm for the Volterra Gaussian process $X$.

\begin{theorem}
\label{thm:LIL} 
Let the kernel  $K(t,s),\ t,s\in[0,1]$ that generates  a Volterra Gaussian process $X$ be continuous and continuously differentiable  with respect to $t$ with $K(0,0)\neq 0$. Then,  
$$
\limsup_{t\to 0}\frac{X(t)}{\sqrt{2\E\left(X(t)^2\right)\log\log \frac{1}{t}}}=1,
$$
almost surely (a.s.).

\end{theorem}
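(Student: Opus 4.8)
The plan is to exploit that, near the origin, $X$ is a negligible perturbation of $K(0,0)$ times a Wiener process, and then to transfer the classical Brownian law of the iterated logarithm. Writing $X(t)=\inner{K(t,\cdot)1_{[0,t]}}{\xi}$, set $\rho(t):=\sup\{|K(u,s)-K(0,0)|:0\le s\le u\le t\}$, which is non-decreasing and, by continuity of $K$ at $(0,0)$, satisfies $\rho(t)\downarrow0$ as $t\downarrow0$. The first step is the variance asymptotics: since $|K(t,s)-K(0,0)|\le\rho(t)$ for $s\le t$,
\[
\sigma^2(t):=\E\!\left(X(t)^2\right)=\int_0^t K(t,s)^2\,\differential{s}=K(0,0)^2\,t\,(1+O(\rho(t)))=K(0,0)^2\,t\,(1+o(1)),\qquad t\to0,
\]
which is positive for small $t$ because $K(0,0)\neq0$; hence $\sqrt{2\sigma^2(t)\log\log(1/t)}=|K(0,0)|\sqrt{2t\log\log(1/t)}\,(1+o(1))$, and it suffices to show $\limsup_{t\to0}X(t)\big/\big(|K(0,0)|\sqrt{2t\log\log(1/t)}\big)=1$ a.s.

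Second, I would split $X(t)=K(0,0)\,W(t)+R(t)$ with $R(t):=\inner{(K(t,\cdot)-K(0,0))1_{[0,t]}}{\xi}$, a centred continuous Gaussian process on the same white noise, and prove the key estimate
\[
\limsup_{t\to0}\frac{|R(t)|}{\sqrt{2t\log\log(1/t)}}=0\qquad\text{a.s.}
\]
Here $\E(R(t)^2)\le t\,\rho(t)^2$, and, splitting $(K(t,\cdot)-K(0,0))1_{[0,t]}-(K(s,\cdot)-K(0,0))1_{[0,s]}$ into its contributions on $[0,s]$ and on $(s,t]$ and using \eqref{eq6.0}, for $s<t$ one obtains $\E\big((R(t)-R(s))^2\big)\le 2tL^2(t-s)^2+2(t-s)\rho(t)^2$. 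Restricted to a geometric block $[\theta^{n+1},\theta^{n}]$ (with $\theta\in(0,1)$ fixed) these bounds say that $R$ is, up to a scalar factor $\kappa_n$ with $\kappa_n^2\le C(\theta^{2n}+\rho(\theta^n)^2)\to0$, comparable to a Brownian increment over an interval of length of order $\theta^n$; hence Dudley's entropy bound yields $\E\sup_{[\theta^{n+1},\theta^n]}|R|\le C_\theta\,\kappa_n\sqrt{\theta^n}=o\!\big(\sqrt{\theta^n\log\log(1/\theta^n)}\big)$, and the Borell--TIS concentration inequality gives, for each fixed $\ve>0$ and all large $n$,
\[
\probOf{\sup_{t\in[\theta^{n+1},\theta^n]}|R(t)|>\ve\sqrt{2\theta^n\log\log(1/\theta^n)}}\le 2\exp\!\left(-\frac{\ve^2\log\log(1/\theta^n)}{C_\theta\,\kappa_n^2}\right)\le\big(n\log(1/\theta)\big)^{-M}
\]
for an arbitrarily large $M$ (because $\kappa_n\to0$). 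These bounds are summable in $n$, so Borel--Cantelli, the monotonicity of $\sqrt{2t\log\log(1/t)}$ inside a block, and then letting $\ve\downarrow0$ and $\theta\uparrow1$ give the displayed estimate for $R$.

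Third, I would invoke the classical Brownian law of the iterated logarithm at the origin, $\limsup_{t\to0}W(t)/\sqrt{2t\log\log(1/t)}=1$ a.s.\ (and, by symmetry of the Wiener process, the same with $W$ replaced by $-W$). Since adding a term that is a.s.\ $o\!\big(\sqrt{2t\log\log(1/t)}\big)$ does not change the $\limsup$, the decomposition and the remainder estimate give $\limsup_{t\to0}X(t)\big/\big(|K(0,0)|\sqrt{2t\log\log(1/t)}\big)=1$ a.s.\ (using the $W$ version when $K(0,0)>0$ and the $-W$ version when $K(0,0)<0$), and combined with the first step this proves the theorem. The main obstacle is the second step --- controlling $R$ \emph{uniformly} over each geometric block rather than merely pointwise --- but it is handled by the Gaussian entropy and concentration estimates above; everything else reduces to bookkeeping with the kernel regularity \eqref{eq6.0} and the hypothesis $K(0,0)\neq0$. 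An alternative, more self-contained route would establish the upper bound directly by applying Borell--TIS to $\sup_{t\le\theta^n}X(t)$ and the lower bound through the asymptotic independence of the increments $X(\theta^n)-X(\theta^{n+1})$, but the perturbative argument is shorter and more transparent.
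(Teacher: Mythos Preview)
Your proof is correct and takes a genuinely different route from the paper's. The paper establishes the LIL directly, mimicking the classical Khinchine argument: for the upper bound it controls $\prob\big(\max_{t\in[0,q^n]}X(t)\ge(1+\ve)h(q^n)\big)$ via the Gaussian-extrema tail estimate of Lemma~\ref{lem:6} and applies Borel--Cantelli; for the lower bound it shows that the events $\{X(q^n)-X(q^{n+1})>h(q^n-q^{n+1})\}$ occur infinitely often via the extended Borel--Cantelli Lemma~\ref{lem:GBCL}, after verifying that the normalised increments become asymptotically independent Gaussians. Your perturbative decomposition $X(t)=K(0,0)W(t)+R(t)$ bypasses both steps by reducing to the known Brownian LIL once $R$ is shown to be a.s.\ $o\big(\sqrt{2t\log\log(1/t)}\big)$; this is cleaner and makes precise the heuristic that $X$ inherits the LIL from $W$. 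The cost is that you invoke Dudley's entropy bound, Borell--TIS concentration, and the Brownian LIL as black boxes, whereas the paper's proof is more self-contained and stays closer to the tools used elsewhere in the article. (Two minor remarks: for the remainder estimate you need only a single fixed $\theta\in(0,1)$, so the final ``$\theta\uparrow1$'' is superfluous; and the alternative route you sketch in your closing sentence is essentially the paper's argument.)
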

\begin{proof}[Proof of Theorem~\ref{thm:LIL}] Let us define the function 
$$
h(t)=\sqrt{2\E\left(X(t)^2\right)\log\log \frac{1}{t}}.
$$
Now, fix $\ve>0$ and $q<1$. Consider the sequence of sets $\{A_n : n\ge 1\}$,  where $A_n$ is  given by 
$$
A_n=\Big\{\max_{t\in[0,q^n]}X(t)\geqslant (1+\ve)h(q^n)\Big\}.
$$
To get an upper estimate for $\prob(A_n)$, we 
apply an estimate for the probability tails of Gaussian extrema from \cite{Samorodnitsky1991tails}, which we state as Lemma~\ref{lem:6}  in Appendix~\ref{subsec:Gaussian tails} for the sake of completeness. Since $K$ is continuously differentiable with respect to $t$, we have
$$
\Eof{X(t)^2}=\int^t_0K^2(t,s)\differential{s}\sim K^2(0,0)t,
$$ 
as $t\to 0$.  It follows from Lemma~\ref{lem:6} that for any $\delta>0,\ \kappa>0 $ there exist a positive constant $k(\delta)<\infty$ and an integer $N$ such that for any $n\geqslant N$, we have 
\begin{align*}
  \prob(\max_{t\in[0,q^n]}X(t)\geqslant (1+\ve)h(q^n)) &{} \leqslant k(\delta)\exp\left({-\frac{(1-\delta)(1+\ve)^2(h(q^n))^2}{2(1+\kappa)K^2(0,0)q^n}}\right)\\ 
  &{} \leqslant k(\delta)\left(\frac{1}{n\log\frac{1}{q}}\right)^{c(\delta, \ve, \kappa)},
\end{align*}
where the constant $c(\delta, \ve, \kappa)$ is given by
\begin{align*}
  c(\delta, \ve, \kappa) &{} = \frac{(1-\delta)(1+\ve)^2(1-\kappa)}{1+\kappa}.\\
\end{align*}
Choosing $\delta<\left(1-\frac{1+\kappa}{(1+\ve)^2(1-\kappa)}\right)$ and
$\kappa<\frac{(1+\ve)^2-1}{(1+\ve)^2+1}$, we can ensure that $c(\delta, \ve, \kappa) >1$, which, in turn, implies 
$$
\sum^{\infty}_{n=1}\prob(A_n)<\infty.
$$
Therefore, it follows from the first part of the Borel--Cantelli lemma \cite{shiryaev2016probability1,Chandra2012Borel} that 
$$
\prob(A_n\ \text{i.o.})=0.
$$


Let $A=\{A_n\  \text{i.o.}\}^c.$ For any $\omega\in A$ and $q^{n+1}\leqslant t<q^n$, we have 
\begin{align}
  \begin{aligned}
    X(t)&{}\leqslant \sup_{t\in[0,q^n]}X(t)\leqslant (1+\ve)h(q^n)\leqslant q^{-\frac{1}{2}}(1+\ve) h(q^{n+1})\leqslant \frac{q^{-\frac{1}{2}}(1+\ve)(1+\kappa)}{1-\kappa}\ h(t).
  \end{aligned}
  \label{eq1.9}  
\end{align}
 It follows from \eqref{eq1.9} that 
$$
\limsup_{t\to0} \frac{X(t)}{h(t)}\leqslant 1\ \text{a.s.}
$$

Since we work with non-independent events, we will need a generalized version of the Borel--Cantelli lemma. For the sake of completeness, we provide a statement in Lemma~\ref{lem:GBCL} in Appendix~\ref{appendix:extra}. Consider the events  
 $$
 B_n=\{X(q^n)-X(q^{n+1})>h(q^{n}-q^{n+1})\}. 
 $$
 Then, there exists $N>0$ such that for $n\geqslant N$, we have 
 \begin{align}
 \begin{aligned}
\label{eq6.4}  
\prob(B_n)&{}=\prob\Big(Z\geqslant \frac{h(q^{n}-q^{n+1})}{\sqrt{\E(X(q^n)-X(q^{n+1}))^2}}\Big)\\
&{}\geqslant\prob\left(Z\geqslant \sqrt{\log\log\frac{1}{q^n-q^{n+1}}}\right)\\
&{}\geqslant k_1\frac{\exp\left({-\log\log\frac{1}{q^n-q^{n+1}}}\right)}{\sqrt{2\log\log\frac{1}{q^n-q^{n+1}}}}\\
&{}\geqslant k_2\frac{1}{n\log n},
\end{aligned}
\end{align}
where $Z$ is a standard Gaussian random variable, i.e., $Z\sim \mathcal{N}(0,1)$,  and the numbers $k_1, k_2$ are some positive constants.   Therefore, we have
 $$
 \sum^{\infty}_{n=1}\prob(B_n)=\infty.
 $$
Now, consider the collection of Gaussian vectors
  $$
 \eta_{n,m}=\left(\frac{X(q^n)-X(q^{n+1})}{\sqrt{\E(X(q^n)-X(q^{n+1}))^2}},\ \frac{X(q^m)-X(q^{m+1})}{\sqrt{\E(X(q^m)-X(q^{m+1}))^2}}\right),\ n,m\geqslant 1.
 $$
 It can be verified that the sequence $ \eta_{n,m}$  converges weakly  to $\mathcal{N}(0,I)$ as $n,m\to\infty.$  Consequently, we have
 $$
 \prob(B_n\cap B_m)\sim \prob(B_n)\prob(B_m),\ \text{ as } m,n\to\infty,
 $$
 i.e., for any $\ve>0,$ there exists $N>0$ such that for all $n,m\geqslant N$, we have 
 $$
  \prob(B_n\cap B_m)\leqslant (1+\ve)\prob(B_n)\prob(B_m).
  $$
  Therefore, it follows from Lemma~\ref{lem:GBCL} in Appendix~\ref{appendix:extra} that 
    $$
  \prob(B_n\ \text{i.o.})\geqslant\frac{1}{1+\ve}.
  $$
  Now, set $B=\{B_n\ \text{i.o.}\}.$ Then,  for $\omega\in B$, we have 
  \begin{equation}
\label{eq6.5} 
  X(q^n)\geqslant X(q^{n+1})+h(q^n-q^{n-1})
  \end{equation}
  holds for infinitely many $n$. From the the upper estimate we have 
  $$
  X(q^{n+1})\geqslant -2h(q^{n+1}).
  $$
    Hence, it follows that 
\begin{align}
\begin{aligned}
\frac{X(q^n)}{h(q^n)}&{}\quad \geqslant\frac{-2h(q^{n+1})+h(q^n-q^{n+1})}{h(q^n)}\\
 &{}\quad \geqslant-2\Big(\frac{1+\kappa}{1-\kappa}\Big)^{\frac{1}{2}}\sqrt{q}(1+\ve)+\Big(\frac{1+\kappa}{1-\kappa}\Big)^{\frac{1}{2}} \sqrt{1-q}(1-\ve),
 \end{aligned}
 \end{align}
 for some  $\kappa>0,\ \ve>0.$ Letting $q,\ \kappa,\ \ve$ go to zero, one gets
  $$
  \limsup_{n\to\infty}\frac{X(q^n)}{h(q^n)}\geqslant 1.
  $$
  Hence, for any $\alpha>0,$ we have
  $$
  \frac{1}{1+\alpha}\leqslant \probOf{\limsup_{t\to 0}\frac{X(t)}{\sqrt{2\Eof{X(t)^2}\log\log \frac{1}{t}}}=1}\leqslant 1,
  $$
  which implies that
  $$
  \limsup_{t\to 0}\frac{X(t)}{\sqrt{2\Eof{X(t)^2}\log\log \frac{1}{t}}}=1
  $$
  holds a.s., as required. 
  \end{proof}
 

\section{Local times and renormalized self-intersection local times}
\label{sec:rosen_local_times}
 In this section, we begin with a discussion on the existence of local times for a Volterra Gaussian process in $\mbR$. We also prove that the local time  depends continuously on the kernel that generates the process. Finally, we construct the Rosen renormalized self-intersection local time for a planar Volterra Gaussian process.

Let $x(t),\ t\in [0,1]$ be an $\mbR$-valued stochastic process. Let us try to measure how much time the process $x$ spends inside  small neighbourhoods of some $y\in\mbR$ up to time $t.$ We will denote this random variable by $l^x(t,y).$ 
Intuitively, one could define $l^x(t,y)$ as
  \begin{equation}
\label{eq6.50} 
  \int^t_0\delta_y(x(s))\differential{s},
 \end{equation}
  where $\delta_y$ is  the Dirac delta function at $y$. The formal expression \eqref{eq6.50} measures the amount of time $s\in[0,t]$ such that the values $x(s)$ are equal to $y.$
  To give a rigorous definition, consider the family of functions
 $$
  f_{\ve,y}(z)=\frac{1}{(2\pi\ve)^{\frac{1}{2}}}\myExp{-\frac{(z-y)^2}{2\ve}},\ z,\ y\in\mbR,\ \ve>0.
 $$
 Note that $f_{\epsilon,y}$  converges weakly to $\delta_y$ for each $y\in\mathbb{R}$ as $\ve \to 0$ in the sense that for any continuous bounded 
 function $\phi:\mathbb{R}\to\mathbb{R}$, we have 
 $$
 \int_{\mathbb{}R}\phi(z)f_{\ve,y}(z)dz\to \phi(y),
  $$
 as $\ve\to 0$. Now, let us define the family of approximations for $l^x(t,y)$ as follows
 $$
   l^x_{\ve}(t,y)=\int^t_0f_{\ve, y}(x(s))\differential{s}.
   $$

\begin{definition}
\label{def:local_time} The random variable
$$
l^x(t,y)=\llim{\ve\to0}l^x_{\ve}(t,y)
$$
is said to be the local time of the process $x$ at $y$ up to time $t$, when the limit exists.
\end{definition}

Let $X(t),\ t\in[0,1]$ be a Volterra Gaussian process generated by a continuous kernel $K$ as defined in Definition~\ref{def:Volterra}. The following lemma shows that the local time of the process $X$ exists at any point $y\in\mbR.$

\begin{lemma}
\label{lem:local_time} Assume that $\min_{t,s\in[0,1]}K(t,s)\neq 0,$ then the local time $l^{X}(t, y)$ of the process $X$ exists at any point $y\in\mbR$, for all $t\in [0,1]$. 
\end{lemma}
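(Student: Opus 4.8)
The plan is to establish the existence of the $L_2$-limit $\llim{\ve\to0} l^X_\ve(t,y)$ by showing that the family $\{l^X_\ve(t,y):\ve>0\}$ is Cauchy in $L_2(\prob)$, which for Gaussian processes reduces to a computation with second moments. First I would write, using the Fourier representation $f_{\ve,y}(z)=\frac{1}{2\pi}\int_{\mbR}\exp(ip(z-y))\exp(-\tfrac{\ve p^2}{2})\differential{p}$,
$$
\Eof{l^X_\ve(t,y)\,l^X_\mu(t,y)}=\frac{1}{(2\pi)^2}\int_0^t\int_0^t\int_{\mbR}\int_{\mbR}e^{-ip y-iq y}\,\Eof{e^{ipX(s)+iqX(r)}}\,e^{-\frac{\ve p^2}{2}-\frac{\mu q^2}{2}}\differential{p}\differential{q}\differential{s}\differential{r}.
$$
Since $(X(s),X(r))$ is a centred Gaussian vector, $\Eof{e^{ipX(s)+iqX(r)}}=\exp\bigl(-\tfrac12(p^2\sigma_s^2+2pq\,\rho_{s,r}+q^2\sigma_r^2)\bigr)$ where $\sigma_s^2=\Eof{X(s)^2}$, $\sigma_r^2=\Eof{X(r)^2}$, $\rho_{s,r}=\Eof{X(s)X(r)}$. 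The inner double integral over $(p,q)$ is then a Gaussian integral that can be evaluated explicitly, yielding a factor proportional to $\bigl(\det\Gamma_{s,r}^{\ve,\mu}\bigr)^{-1/2}$, where $\Gamma_{s,r}^{\ve,\mu}$ is the covariance matrix of $(X(s),X(r))$ perturbed by $\mathrm{diag}(\ve,\mu)$. Letting $\ve,\mu\to0$ and checking that the resulting expressions converge (and that the limits of $\Eof{(l^X_\ve)^2}$, $\Eof{(l^X_\mu)^2}$ and $\Eof{l^X_\ve l^X_\mu}$ all coincide) gives the Cauchy property; by completeness of $L_2(\prob)$ the limit exists.

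The key analytic input, and the place where the hypothesis $\min_{t,s\in[0,1]}K(t,s)\neq 0$ enters, is the \emph{local nondeterminism}–type bound guaranteeing that $\det\Gamma_{s,r}=\sigma_s^2\sigma_r^2-\rho_{s,r}^2$ does not degenerate too fast as $r\to s$. Concretely I would show there is $c>0$ with $\det\Gamma_{s,r}\geqslant c\,|s-r|\,(s\wedge r)$ for $s,r\in(0,1]$, or at least a bound of the form $\det\Gamma_{s,r}\geqslant c\,|s-r|$ away from the diagonal endpoint, using that $\Eof{(X(s)-X(r))^2}\geqslant \int_{r\wedge s}^{r\vee s}K^2(s\vee r,u)\differential u$ together with the lower bound on $|K|$; one also needs $\det\Gamma_{s,r}=\sigma_{s\wedge r}^2\,\Eof{(X(s\vee r)-\alpha X(s\wedge r))^2}$ for the appropriate regression coefficient $\alpha$, combined with $\sigma_s^2\sim K^2(0,0)s$. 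This integrability estimate, namely that $\int_0^t\int_0^t \bigl(\det\Gamma_{s,r}\bigr)^{-1/2}\differential s\differential r<\infty$, is exactly what makes the finite-dimensional Gaussian integrals converge as the regularisation is removed, and I expect verifying it to be the main obstacle; the rest is bookkeeping with Gaussian integrals and dominated convergence.

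Once the $L_2$-convergence along $\ve\to0$ is in hand, I would remark that the same moment computations show $\Eof{l^X_\ve(t,y)}\to \frac{1}{(2\pi\sigma_s^2)^{1/2}}\int_0^t \exp(-y^2/2\sigma_s^2)\differential s$ and $\Eof{(l^X(t,y))^2}<\infty$, so that $l^X(t,y)$ is a genuine element of $L_2(\prob)$ for every $y\in\mbR$ and every $t\in[0,1]$, as claimed. I would also note that the argument is uniform enough in $y$ (the Gaussian weight $\exp(-y^2/2\sigma_s^2)\le 1$ only helps) that no restriction on $y$ is needed, which is why the statement holds at every point of $\mbR$.
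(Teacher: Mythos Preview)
Your approach is essentially the paper's: both reduce the existence of $l^X(t,y)$ to the integrability condition
\[
\int_0^1\int_0^1 \bigl(\det\Cov(X(s),X(r))\bigr)^{-1/2}\,\differential{s}\,\differential{r}<\infty,
\]
which the paper invokes as Rudenko's criterion (Lemma~\ref{lem:Rudenko}) and which you are effectively re-deriving via the Fourier/Gaussian--integral Cauchy argument. The analytic core---the lower bound $\det\Gamma_{s,r}\ge c\,(s\wedge r)\,|s-r|$---is the same in both.

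One point in your sketch deserves tightening. You propose to combine the identity $\det\Gamma_{s,r}=\sigma_{s\wedge r}^2\,\Eof{(X(s\vee r)-\alpha X(s\wedge r))^2}$ with the increment bound $\Eof{(X(t)-X(s))^2}\ge\int_{s}^{t}K^2(t,u)\,\differential{u}$, but the conditional variance $\Var(X(t)\mid X(s))=\min_\beta\Eof{(X(t)-\beta X(s))^2}$ is \emph{at most} the increment variance, so the latter does not directly give the needed lower bound. The paper closes this gap by enlarging the conditioning: since $\sigma(X(s))\subseteq\sigma(W(u):u\le s)$ and Gaussian conditional variances decrease under enlargement of the conditioning $\sigma$-field,
\[
\Var\bigl(X(t)-X(s)\mid X(s)\bigr)\ \ge\ \Var\bigl(X(t)-X(s)\mid \sigma(W(u):u\le s)\bigr)=\int_s^t K^2(t,r)\,\differential{r},
\]
the last equality coming from the Volterra representation. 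Together with $\sigma_s^2=\int_0^s K^2(s,r)\,\differential{r}\ge c\,s$ (from $\min|K|>0$) this yields $\det\Gamma_{s,r}\ge c\,s\,(t-s)$ for $s<t$, and the double integral is finite. With this correction your argument goes through and matches the paper's.
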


\begin{proof}[Proof of Lemma~\ref{lem:local_time}] In order to prove the lemma, we will use a sufficient condition proved by A. Rudenko in \cite{rudenko2006existence} for the existence of local times for Gaussian processes in $\mbR$. For the sake of completeness, a statement of the criterion is provided in Lemma~\ref{lem:Rudenko}. 


Note that, for $t>s$, we have
\begin{align}
\begin{aligned}
\det\Cov(X(s),X(t))&{}=\Var(X(s))\Var(X(t)-X(s)\mid X(s))\\
&{} \geqslant  \Var(X(s))\Var(X(t)-X(s)\mid W(s))\\
&{}= \int^s_0K^2(s,r)\differential{r}\int^t_sK^2(t,r)\differential{r}\geqslant c\ s(t-s),
\end{aligned}
\end{align}
where $c$ is a positive constant, and $\Cov(X(s), X(t))$ denotes the $2\times 2$ covariance matrix of the random vector $(X(s), X(t))$. Since
$$
\int^1_0\int^1_0\frac{1}{\sqrt{s(t-s)}}\differential{s}\differential{t}<\infty,
$$
by an application of Rudenko's criterion from Lemma~\ref{lem:Rudenko}, we conclude that the
local time of the process $X$ exists at zero on $[0,1],$ and consequently, it exists at any point $y\in\mbR.$
\end{proof}
We note that an alternative proof can be obtained by means of the \emph{local nondeterminism property} (see \cite{Nolan1989LND,Berman1989LNDgeneral,Berman1991Selfintersections}). Local nondeterminism  as a condition for the existence of local times for one-dimensional Gaussian processes was introduced by S. Berman in \cite{berman1973local}.  Let $x(t),\ t\in [0,1]$ be an $\mbR$-valued Gaussian process with mean $0$. The process $x$ is said to have the local nondeterminism property if there exists $d>0$ such that
 \begin{equation}
\label{eq6.7} 
\Eof{(x(t)-x(s))^2}>0, \text{for all}\  s, t\in (0,1),\ \text{ and }\  0<|t-s|\leq d, 
 \end{equation}
 \begin{equation}
\label{eq6.8} 
\text{ and } \Eof{x^2(t)}>0, \text{ for all }\  t\in (0,1).
\end{equation}
For Volterra Gaussian processes that satisfy conditions  \eqref{eq6.7} and  \eqref{eq6.8}, the condition of the local nondeterminism on $(0,1)$ reduces the following requirement on the kernel $K$:
 \begin{equation}
\label{eq6.9} 
 \lim_{c\downarrow 0}\inf_{0<t-s\leqslant c}\frac{\int^t_sK^2(t,r)\differential{r}}{\int^s_0(K(t,r)-K(s,r))^2 \differential{r}}>0.
  \end{equation}
 Intuitively, the condition  \eqref{eq6.9} means that a future observation cannot be predicted by past observations. An extension of the local nondeterminism property for the Volterra Gaussian processes in $\mbR^d$ was introduced in \cite{harang2022regularity} and defined as follows. If for a Volterra Gaussian process $x(t)=\int^t_0K(t,s)\differential{W(s)},\ t\in[0,1]$ the kernel satisfies the following condition
  \begin{equation}
\label{eq6.10} 
 \lim_{t\to0}\inf_{s\in(0,t]}\frac{1}{(t-s)^{\zeta}}\int^t_sK^2(t,r)\differential{r}>0,
  \end{equation}
then the process $x$ is said to be $(2,\zeta)$-locally nondeterministic. Moreover, it was proved in \cite{harang2022regularity} that for $(2,\zeta)$-locally nondeterministic Volterra Gaussian processes in $\mbR^d$ with some $\zeta\in (0,\frac{2}{d})$ local time exists at any point. Note that, a one-dimensional Volterra Gaussian process $X$ generated by a continuous kernel $K(t,s),\ t,s\in[0,1]$, which is continuously differentiable with respect to $t$ with $K(0,0)\neq 0$, is $(2,1)$-locally nondeterministic, which implies that the local time for $X$ exists at any point of real line. For $d\geqslant 2$ the condition \eqref{eq6.10} is not fulfilled for any $\zeta\in (0,\frac{2}{d}).$  

Before discussing renormalized self-intersection local times, we make a second remark, which is interesting on its own and does not seem to have been discussed in the literature previously. Let $X_n,\ n\geqslant 1$, and $ X$ be Volterra Gaussian processes generated by continuous kernels $K_n,\ n\geqslant 1$, and $ K$, respectively. With slight abuse of notation, let us define 
\begin{align}
  l^{(n)}(t, y):= l^{X_n}(t, y), \ \text{ and } l(t, y):= l^{X}(t, y). \label{eq:local_time_n}
\end{align}
The following lemma is about the continuous dependence of the local times for a Volterra Gaussian process on the kernels generating the processes.

\begin{lemma}\label{lem:continuous_dependence}
  Let kernels $K_n(t,s),\ t,s\in[0,1],\ n\geqslant1$, and $ K(t,s),\ t,s\in[0,1]$ be continuous, continuously differentiable with respect to $t$ with $K_n(0,0)\neq 0$ and $K(0,0)\neq 0.$  Furthermore, assume that 
  \begin{equation}
    \label{eq:continuous_dependence} 
      \max_{s,t\in[0,1]}|K_n(t,s)-K(t,s)|\to0,\ \text{as}\ n\to0 .
  \end{equation}
  Then, we have 
  \begin{align}
  \begin{aligned} 
   \label{eq:continuous_dependence1} 
  \lim_{n\to \infty} \max_{t\in[0,1]}\Eof{(X_n(t)-X(t))^2 } & = 0,
\end{aligned}
\end{align}
 \begin{align}
  \begin{aligned} 
   \label{eq:continuous_dependence2} 
\text{and}\quad \lim_{n\to \infty} \Eof{\int_{\mbR}(l^{(n)}(t, y)-l(t, y))^2 \differential{y}} & = 0,\ \text{ for all } t\in [0, 1].
\end{aligned}
\end{align}
\end{lemma}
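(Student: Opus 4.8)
I would put every process on one white noise, reduce the $L_2(\Omega\times\mbR)$ distance between the local times to an explicit double integral of a function of the incremental variances, and then let $n\to\infty$ by dominated convergence.

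\emph{Coupling and \eqref{eq:continuous_dependence1}.} First I would fix a Wiener process $W$ with associated white noise $\xi$ in $L_2([0,1])$ and realise all the processes simultaneously as $X_n(t)=\inner{K_n(t,\cdot)1_{[0,t]}}{\xi}$ and $X(t)=\inner{K(t,\cdot)1_{[0,t]}}{\xi}$; this does not change the laws of the local times in \eqref{eq:local_time_n} and makes $(X_n,X)$ jointly centred Gaussian. Then $\Eof{(X_n(t)-X(t))^2}=\norm{(K_n(t,\cdot)-K(t,\cdot))1_{[0,t]}}_{L_2}^2\leqslant\big(\max_{s,t\in[0,1]}|K_n(t,s)-K(t,s)|\big)^2$, which gives \eqref{eq:continuous_dependence1}. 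Write $\eta_n:=\big(\max_{t\in[0,1]}\Eof{(X_n(t)-X(t))^2}\big)^{1/2}\to0$.

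\emph{A basic second-moment identity.} For centred, jointly Gaussian, mean-square continuous processes $Y^{(1)},Y^{(2)}$ on $[0,1]$ and $\varepsilon_1,\varepsilon_2>0$, Fubini together with the Gaussian convolution identity $\int_{\mbR}f_{\varepsilon_1,y}(a)f_{\varepsilon_2,y}(b)\,\differential{y}=(2\pi(\varepsilon_1+\varepsilon_2))^{-1/2}\myExp{-\tfrac{(a-b)^2}{2(\varepsilon_1+\varepsilon_2)}}$ and $\Eof{\myExp{-\lambda Z^2}}=(1+2\lambda\sigma^2)^{-1/2}$ for $Z\sim\mathcal N(0,\sigma^2)$ yield
\begin{equation*}
\Eof{\int_{\mbR}l^{Y^{(1)}}_{\varepsilon_1}(t,y)\,l^{Y^{(2)}}_{\varepsilon_2}(t,y)\,\differential{y}}=\int^t_0\int^t_0\frac{\differential{s_1}\,\differential{s_2}}{\sqrt{2\pi\big(\varepsilon_1+\varepsilon_2+\Eof{(Y^{(1)}(s_1)-Y^{(2)}(s_2))^2}\big)}}.
\end{equation*}
As $\varepsilon_1,\varepsilon_2\downarrow0$ the integrand increases, so by monotone convergence the right side tends to $\int^t_0\int^t_0\big(2\pi\Eof{(Y^{(1)}(s_1)-Y^{(2)}(s_2))^2}\big)^{-1/2}\differential{s_1}\,\differential{s_2}$, which is finite for each of the pairs $(Y^{(1)},Y^{(2)})\in\{(X_n,X_n),(X_n,X),(X,X)\}$: bounding the incremental variances from below by $\int_{s_1\wedge s_2}^{s_1\vee s_2}K^2(\,\cdot\,,r)\,\differential{r}$ (respectively $K_n^2$), exactly as in the proof of Lemma~\ref{lem:local_time}, reproduces the integrability $\int^t_0\int^t_0(\det\Cov)^{-1/2}<\infty$ behind Rudenko's criterion. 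Hence $l^{Y^{(i)}}_{\varepsilon}(t,\cdot)\to l^{Y^{(i)}}(t,\cdot)$ in $L_2(\Omega\times\mbR)$ (the Cauchy property follows from the displayed identity), and passing to the limit in the $L_2(\mbR)$-inner product gives
\begin{equation*}
\Eof{\int_{\mbR}l^{Y^{(1)}}(t,y)\,l^{Y^{(2)}}(t,y)\,\differential{y}}=\int^t_0\int^t_0\frac{\differential{s_1}\,\differential{s_2}}{\sqrt{2\pi\,\Eof{(Y^{(1)}(s_1)-Y^{(2)}(s_2))^2}}}.
\end{equation*}

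\emph{Difference formula, pointwise convergence, and domination.} Expanding the square and applying the previous display to the three pairs above,
\begin{equation*}
\Eof{\int_{\mbR}(l^{(n)}(t,y)-l(t,y))^2\,\differential{y}}=\frac{1}{\sqrt{2\pi}}\int^t_0\int^t_0\Big(a_n^{-1/2}-2\,b_n^{-1/2}+c^{-1/2}\Big)\,\differential{s_1}\,\differential{s_2},
\end{equation*}
where $a_n=\Eof{(X_n(s_1)-X_n(s_2))^2}$, $b_n=\Eof{(X_n(s_1)-X(s_2))^2}$, $c=\Eof{(X(s_1)-X(s_2))^2}$ (all functions of $s_1,s_2$). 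The triangle inequality in $L_2(\Omega)$ and the coupling give $|\sqrt{a_n}-\sqrt{c}|\leqslant2\eta_n$ and $|\sqrt{b_n}-\sqrt{c}|\leqslant2\eta_n$, so the integrand tends to $0$ at every $(s_1,s_2)$ with $s_1\neq s_2$, i.e.\ almost everywhere on $[0,t]^2$. For the interchange of limit and integral, I would use that, by \eqref{eq:continuous_dependence} and continuity, the kernels $K_n$ and $K$ are bounded away from $0$ uniformly in $n$ (for $n$ large) on a square $[0,\delta]^2$ around the space–time origin, and that under the non-degeneracy (local nondeterminism) hypotheses that already guarantee the existence of the local times the incremental variances are bounded below by $\mathrm{const}\cdot|s_1-s_2|$ on $[0,t]^2$, with a constant uniform in large $n$ once $K_n$ is close to $K$; then $a_n^{-1/2},b_n^{-1/2},c^{-1/2}\leqslant C|s_1-s_2|^{-1/2}\in L_1([0,t]^2)$ for all large $n$, and dominated convergence yields \eqref{eq:continuous_dependence2}.

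\emph{Where the difficulty lies.} The elementary Gaussian computations (the two displayed identities) are routine; the genuinely delicate point is the uniform-in-$n$ domination of $a_n^{-1/2}$ and of the mixed term $b_n^{-1/2}$, i.e.\ keeping the increments of $X_n$ and the ``cross'' quantity $\Eof{(X_n(s_1)-X(s_2))^2}$ non-degenerate as $n$ varies. Near the origin this is controlled by $K_n(0,0)\to K(0,0)\neq0$ and \eqref{eq:continuous_dependence}, which pin the relevant constants; away from the origin it rests on a local-nondeterminism-type lower bound on $K$ (cf.\ \eqref{eq6.9}), transported to $K_n$ via \eqref{eq:continuous_dependence}. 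Everything else is bookkeeping.
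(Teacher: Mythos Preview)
Your approach is correct and is essentially the paper's own: expand the $L_2(\Omega\times\mbR)$-distance into the three cross terms, express each as $\displaystyle\int_0^t\!\int_0^t\big(2\pi\,\Eof{(Y^{(1)}(s_1)-Y^{(2)}(s_2))^2}\big)^{-1/2}\,\differential{s_1}\,\differential{s_2}$ via a Gaussian computation, and pass to the limit by dominated convergence. The only cosmetic differences are that the paper derives this integral identity through the Fourier--Wiener transform and the formal relation $\int_{\mbR}\delta_y(\cdot)\delta_y(\cdot)\,\differential{y}=\delta_0(\cdot-\cdot)$ rather than your elementary convolution $\int_{\mbR}f_{\varepsilon_1,y}f_{\varepsilon_2,y}\,\differential{y}=f_{\varepsilon_1+\varepsilon_2,0}$, and that the paper is silent about the uniform-in-$n$ dominating function, which you correctly identify as the only non-routine point.
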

\begin{proof}[Proof of Lemma~\ref{lem:continuous_dependence}]
The proof of \eqref{eq:continuous_dependence1} is straightforward. To prove \eqref{eq:continuous_dependence2}, let us check that for each $t\in[0,1]$, we have 
 \begin{align*} 
 &{}\Eof{\int_{\mbR}l^{(n)}(t, y)^2 \differential{y}}\to\Eof{\int_{\mbR}l(t, y)^2 \differential{y}}
    \end{align*}
and 
 \begin{align*} 
 &{}\Eof{\int_{\mbR}l^{(n)}(t, y)l(t, y) \differential{y}}\to\Eof{\int_{\mbR}l(t, y)^2 \differential{y}}
    \end{align*}
    as $n\to\infty.$
    Using the approximation procedure and the Fourier--Wiener transform, one can check that for each $s,t\in[0,1]$ the following relation holds
    $$
    \int_{\mbR}\delta_y(X_n(t))\delta_y(X_n(s))\differential{y}=\delta_0(X_n(t)-X_n(s)).
    $$
    It implies that
   \begin{align*}
    &{}\Eof{\int_{\mbR}l^{(n)}(t, y)^2 \differential{y}}=\Eof{\int^1_0\int^1_0\delta_0(X_n(t)-X_n(s))\differential{s}\differential{t}}\\
    &{}=\lim_{\ve\to0}\Eof{\int^1_0\int^1_0f_{\ve}(X_n(t)-X_n(s))\differential{s}\differential{t}}\\
         &{}=\frac{2}{2\pi}\int_{\Delta_2}\frac{1}{\sqrt{\|K_n(t,\cdot)1_{[s,t]}\|^2+\|K_n(t,\cdot)-K_n(s,\cdot)1_{[0,s]}\|^2}}\differential{s}\differential{t}, 
  \end{align*}
 which converges to
 $$
\Eof{\int_{\mbR}l(t,y)^2 \differential{y}}=\frac{2}{2\pi}\int_{\Delta_2}\frac{1}{\sqrt{\|K(t,\cdot)1_{[s,t]}\|^2+\|K(t,\cdot)-K(s,\cdot)1_{[0,s]}\|^2}}\differential{s}\differential{t}
 $$
 as $n\to\infty$, due to  Lebesgue's dominated convergence theorem. Applying the same arguments, one can check that
 \begin{align*} 
 &{}\Eof{\int_{\mbR}l^{(n)}(t, y)l(t, y) \differential{y}}\to\Eof{\int_{\mbR}l(t, y)^2 \differential{y}}
    \end{align*}
    as $n\to\infty$, which completes the proof of the lemma.
    \end{proof}



 Let us now investigate self-intersection local times. Let $$\mathcal{W}(t) \coloneqq (W_1(t), W_2(t)),\ t\in[0,1]$$ be an $\mbR^2$-valued (planar) Wiener process. It follows from the Dvoretzky, Erd\"os, Kakutani theorem \cite{dvoretzky1954multiple} that almost all paths of $\mathcal{W}$ have points of self-intersection of arbitrary finite multiplicity. So, by analogy to the local time, one can try to measure how much time a planar Wiener process spends in small neighbourhoods of its self-intersection points. For $k \in \mbN$, it can be measured by the following formal expression
\begin{equation}
\label{eq:selfinter_formal}
\int_{\Delta_k}\left(\prod^{k-1}_{i=1}\delta_0(\mathcal{W}(t_{i+1})-\mathcal{W}(t_i))\right)\prod^{k}_{i=1}\differential{t_i},
\end{equation}
where $\Delta_k=\{0\leqslant t_1\leqslant\ldots\leqslant t_k\leqslant 1\}$ and $\delta_0$ is the two-dimensional Dirac delta function, with $0$ denoting the vector $(0,0)$ in $\mbR^2.$ To give a rigorous meaning to  \eqref{eq:selfinter_formal}, in the same way as we did to define local times in Definition~\ref{def:local_time}, consider a family of approximations for  $\delta_0$ as 
$$
f_{\ve}(y):=f_{\ve,0}(y)=\frac{1}{2\pi\ve}\myExp{-\frac{\|y\|^2}{2\ve}},\ \ve>0,\ y\in\mbR^2, 
$$
and define approximations for the self-intersection local time in \eqref{eq:selfinter_formal} of $\mathcal{W}$ as follows
$$
T_{\ve,k}^{\mathcal{W}}=\int_{\Delta_k}\left(\prod^{k-1}_{i=1}f_\ve(\mathcal{W}(t_{i+1})-\mathcal{W}(t_i))\right) \prod_{i=1}^{k}\differential{t_i}.
$$
Similar to local times, we could define 
\begin{equation}
\label{eq:naive_self_loc_k}
T^{\mathcal{W}}_k=\llim{\ve\to0}T^{\mathcal{W}}_{\ve,k}, 
\end{equation}
and call it a self-intersection local time for $\mathcal{W}$ if the limit exists. However, unfortunately, it is not difficult to see that the limit in \eqref{eq:naive_self_loc_k} does not exist. The reason is that a planar Wiener process has too many self-intersections in the  neighbourhoods of the diagonals of $\Delta_k.$ To compensate the contributions of the diagonals, we need a renormalization. Renormalizations for the self-intersection local time of $\mathcal{W}$ were precisely formulated by S. R. S. Varadhan, J. Rosen and E. B. Dynkin in \cite{varadhan1969appendix,rosen1986renormalized,dynkin1988regularized}. In the present work, we adopt the Rosen renormalization defined as follows
$$
L_{\ve,k}^{\mathcal{W}}\coloneqq \int_{\Delta_k}\left(\prod^{k-1}_{i=1}\bar{f_{\ve}}(\mathcal{W}(t_{i+1})-\mathcal{W}(t_i))\right) \prod_{i=1}^{k}\differential{t_i},
$$
where we have used the notation $\bar{f_\ve}(\cdot)$ to mean  $\bar{f_\ve}(\cdot)=f_\ve(\cdot)-\Eof{f_\ve(\cdot)}$. 
In \cite{rosen1986renormalized}, Rosen proved that for any $k\geqslant 2$, the following random variable 
$$
L_k^{\mathcal{W}} \coloneq \llim{\ve\to0}L_{\ve,k}^{\mathcal{W}}
$$
exists. We refer to $L_k^{\mathcal{W}}$ as the Rosen renormalized self-intersection local time of multiplicity $k$ for the planar Wiener process $\mathcal{W}.$

Given the discussion above, one expects that a naive definition of self-intersection local times for Volterra Gaussian processes as the $L_2$-limit in \eqref{eq:naive_self_loc_k} does not work and we need to renormalize the integrand. In order to construct the Rosen renormalized self-intersection local time of multiplicity $k$ for a planar Volterra Gaussian process, we use the approach introduced in \cite{Dorogovtsev2011regularization} based on the white noise representation of Gaussian processes and by studying functionals of the white noise via its Fourier--Wiener transform \cite{Cameron1945examples,Cameron1945Fourier}. 
Our main aim is to apply  Rosen renormalization to the formal Fourier--Wiener transform of the $k$-multiple self-intersection local time for the planar Volterra Gaussian process.   Intuitively, one can expect that it could be done if the increments of $X$ on small time intervals behave like the increments of Wiener process. In order to ensure such behaviour of increments of $X$, we need to impose some conditions on the kernel $K$ generating the process. For that purpose, we need the notion of a strongly locally nondeterministic process introduced in \cite{Dorogovtsev2011regularization}, which we describe below. 


Let $x(t)=\inner{g(t)}{\xi},\ t\in[0,1]$ be a one dimensional centred Gaussian process, where $g\in C([0,1],L_2([0,1]))$, the space of continuous maps from $[0, 1]$ to $L_2([0, 1])$,  and
$\xi$ is a white noise in $L_2([0,1])$. Denote by $G(e_1,\ldots,e_n)$ the Gram determinant constructed by the elements $e_1,\ldots,e_n$.
Assume that the function $g$  is such that for any 
$0\leqslant t_1<\ldots<t_n\leqslant 1$, we have
$$
G(\Delta g(t_1),\ldots,\Delta g(t_{n-1}))>0,
$$
where $\Delta g(t_i)=g(t_{i+1})-g(t_i),\ i=1,\ldots,n-1.$

\begin{definition}[Strongly locally nondeterministic process \cite{Dorogovtsev2011regularization}]
\label{def:strongLND} The process $x$ (or equivalently, the function $g$) is said to be strongly locally nondeterministic if for any $k\geqslant 2$ and subset $M\subseteq \{1,\ldots, k-1\}$, we have 
$$
G(\Delta g(t_1),\ldots,\Delta g(t_{k-1}))\sim G(\Delta g(t_i),\ i\notin M)\prod_{i\in M}\|\Delta g(t_i)\|^2,
$$
when $\max_{i\in M}\Delta t_i\to0.$
\end{definition}
It is interesting to compare the strong local nondeterminism with the Berman's local nondeterminism \cite{berman1973local}. To do this, let us reformulate the Berman's local nondeterminism in our context.
\begin{definition}
\label{def:localLND} The process $x$ (or equivalently, the function $g$) is said to be locally nondeterministic if for every $k\geqslant 1$
$$
\liminf_{(t_n-t_1)\to0}\frac{G(\Delta g(t_1),\ldots,\Delta g(t_{n-1}))}{G(\Delta g(t_1),\ldots,\Delta g(t_{n-2}))\|\Delta g(t_{n-1})\|^2}>0.
$$
\end{definition}
Evidently, a strongly locally nondeterministic process is locally nondeterministic. However, the reverse statement is not true in general.

Let us assume  $K(t,s):=K(t-s),\ t,s\in[0,1]$ and that it is a continuous function. The following lemma about general centred Gaussian processes is immediate, and will be useful to study renormalized self-intersection local times of Volterra Gaussian processes.

\begin{lemma}
\label{lem:g_str_loc_nondet} Let  $g(t)(s) \coloneq K(t-s)1_{[0,t]}(s),\ t,s\in[0,1],$ where $K$ is  continuous, continuously differentiable with respect to $t$ with $K(0)\neq 0$,  then  the function $g$ is strongly locally nondeterministic in the sense of Definition~\ref{def:strongLND}.
\end{lemma}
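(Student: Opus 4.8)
The plan is to reduce the statement to two elementary estimates on the increments $\Delta g(t_i) := g(t_{i+1})-g(t_i)$ and then to a short computation with Gram determinants. As an element of $L_2([0,1])$, $\Delta g(t_i)$ equals $K(t_{i+1}-s)-K(t_i-s)$ for $s\in[0,t_i]$ and $K(t_{i+1}-s)$ for $s\in(t_i,t_{i+1}]$. Let $L := \|K'\|_\infty < \infty$, and use continuity of $K$ together with $K(0)\neq 0$ to fix $\delta_0>0$ with $|K(u)| \geqslant |K(0)|/2$ on $[0,\delta_0]$. I would first record two facts. First, $\|\Delta g(t_i)\|^2 = \int_0^{\Delta t_i} K(u)^2\,\differential{u} + \int_0^{t_i}(K(t_{i+1}-s)-K(t_i-s))^2\,\differential{s}$; the second integral is $O(\Delta t_i^2)$ by the Lipschitz bound, so $\|\Delta g(t_i)\|^2 \sim K^2(0)\,\Delta t_i$ as $\Delta t_i\to 0$, and moreover $\|\Delta g(t_i)\| \geqslant \tfrac{|K(0)|}{2}\,(\Delta t_i\wedge\delta_0)^{1/2}$ for all $i$. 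Second, for $a<b$ one has $t_{a+1}\leqslant t_b$, so $\Delta g(t_b)$ is in its regular regime on $[0,t_{a+1}]\supseteq\operatorname{supp}\Delta g(t_a)$ and $|\Delta g(t_b)(s)|\leqslant L\,\Delta t_b$ there; hence $|\inner{\Delta g(t_a)}{\Delta g(t_b)}| \leqslant L\,\Delta t_b\,\|\Delta g(t_a)\|_{L_1} \leqslant C\,\Delta t_a\,\Delta t_b$. Combining the two, and noting that $\Delta t_j/\|\Delta g(t_j)\|$ is bounded uniformly in $t_j$, we get for every $i\in M$ and $j\neq i$ that the correlation $\rho_{ij} := \inner{\Delta g(t_i)}{\Delta g(t_j)}/(\|\Delta g(t_i)\|\,\|\Delta g(t_j)\|)$ satisfies $|\rho_{ij}| \leqslant C'\sqrt{\Delta t_i} \to 0$ as $\max_{i\in M}\Delta t_i\to 0$.

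Now the bookkeeping. Let $R = (\rho_{ij})$ be the correlation matrix of $(\Delta g(t_1),\dots,\Delta g(t_{k-1}))$ and $R_{M^c}$ its principal submatrix on the indices not in $M$. Factoring out the norms gives $G(\Delta g(t_1),\dots,\Delta g(t_{k-1})) = \big(\prod_i\|\Delta g(t_i)\|^2\big)\det R$ and $G(\Delta g(t_i),\ i\notin M) = \big(\prod_{i\notin M}\|\Delta g(t_i)\|^2\big)\det R_{M^c}$, so the asserted equivalence is exactly $\det R/\det R_{M^c}\to 1$. Running Gram--Schmidt on the unit vectors $u_i := \Delta g(t_i)/\|\Delta g(t_i)\|$ with the indices of $M$ processed last, $\det R = \det R_{M^c}\cdot\prod_{i\in M}\big(1-\|P_{V_i}u_i\|^2\big)$, where $V_i$ is the span of $\{u_j : j\notin M\}\cup\{u_{i'} : i'\in M,\ i'<i\}$ and $P_{V_i}$ the orthogonal projection onto it. Each factor lies in $(0,1]$, so the inequality ``$\leqslant$'' is automatic and only the lower bound matters; it suffices to show $\|P_{V_i}u_i\|^2\to 0$ for each $i\in M$.

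To finish, write $\|P_{V_i}u_i\|^2 = \hat b_i^{\top}\widehat\Gamma_i^{-1}\hat b_i$, where $\widehat\Gamma_i$ is the Gram matrix of the spanning set of $V_i$ (linearly independent by the standing nondegeneracy hypothesis on $g$), namely the submatrix of $R$ on $M^c\cup\{i'\in M : i'<i\}$, and $\hat b_i$ is the vector of correlations of $u_i$ with that spanning set. By the estimate above, every entry of $\hat b_i$ is $O(\sqrt{\max_{i\in M}\Delta t_i})$; moreover every off-diagonal entry of $\widehat\Gamma_i$ that involves an index of $M$ is $o(1)$, so $\widehat\Gamma_i\to\operatorname{diag}(R_{M^c},I)$, a nonsingular limit (using that the remaining times $t_j$, $j\notin M$, stay in a nondegenerate configuration), and hence $\|\widehat\Gamma_i^{-1}\|$ stays bounded. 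Therefore $\|P_{V_i}u_i\|^2 \leqslant \|\widehat\Gamma_i^{-1}\|\,\|\hat b_i\|^2 \to 0$, which yields the lemma.

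I expect the real content to lie in the two increment estimates: that is precisely where the hypotheses are used, with $K\in C^1$ ensuring that the boundary slice $K(t_{i+1}-s)$ on $(t_i,t_{i+1}]$ carries the full $\asymp\Delta t_i$ of $L_2$-mass while the $[0,t_i]$-part is only an $O(\Delta t_i^2)$ perturbation, and $K(0)\neq 0$ giving the matching lower bound on $\|\Delta g(t_i)\|$ and hence the $o(1)$ control of the correlations $\rho_{ij}$. The other point requiring care is the passage from ``$u_i$ is asymptotically orthogonal to each spanning vector of $V_i$'' to ``$\|P_{V_i}u_i\|\to 0$'': this fails without a bound on the conditioning of $\widehat\Gamma_i$, and that bound comes from the nondegeneracy of the limiting configuration of the non-clustered times. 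The rest is the routine linear algebra of Gram determinants.
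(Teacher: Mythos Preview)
Your argument is correct and takes a genuinely different route from the paper's. The paper works probabilistically: it writes $G(\Delta g(t_1),\ldots,\Delta g(t_{k-1}))$ as the product of conditional variances $\Var\bigl(\Delta x(t_{i+1})\mid \Delta x(t_1),\ldots,\Delta x(t_i)\bigr)$, gets the upper bound by dropping the conditioning, and gets the lower bound by conditioning instead on the \emph{larger} $\sigma$-field generated by the Wiener values $W(t_1),\ldots,W(t_i)$, which makes each factor explicit as $\int_{t_i}^{t_{i+1}}K^2(t_{i+1}-r)\,\differential{r}$. This one-line trick handles only the extreme case $M=\{1,\ldots,k-1\}$ with $t_k-t_1\to 0$, and the paper then invokes an unspecified ``method of induction'' for general $M$. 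Your approach, by contrast, estimates the cross inner products $\inner{\Delta g(t_a)}{\Delta g(t_b)}=O(\Delta t_a\,\Delta t_b)$ directly and reduces the statement to the linear-algebraic claim $\det R/\det R_{M^c}\to 1$, addressing arbitrary $M$ in one pass. The paper's conditioning device is slicker and, because the lower bound it produces is uniform in all the times, requires no hypothesis on $R_{M^c}$; your computation is more hands-on and makes transparent exactly which off-diagonal correlations vanish and why.

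One point to tighten: your bound $\|P_{V_i}u_i\|^2\leqslant\|\widehat\Gamma_i^{-1}\|\,\|\hat b_i\|^2$ needs $\|\widehat\Gamma_i^{-1}\|$ to stay bounded, which you justify by assuming the non-$M$ increments remain in a nondegenerate configuration. The definition of strong local nondeterminism, read literally, does not exclude sequences in which some $\Delta t_j$ with $j\notin M$ also tend to zero, and then $\det R_{M^c}$ may collapse and $\|\widehat\Gamma_i^{-1}\|$ blow up. The fix is routine---pass to a subsequence along which each non-$M$ increment either stays bounded away from zero or tends to zero, absorb the latter indices into $M$, and apply the result at smaller $k$---but this is precisely the induction the paper also leaves implicit, so you should state it rather than assume it away.
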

\begin{proof}[Proof of Lemma~\ref{lem:g_str_loc_nondet}] Let $\xi$ be a white noise in $L_2([0,1])$ generated by the Wiener process $W(t),\ t\in[0,1].$ Consider a Gaussian process 
$$
x(t)=\inner{g(t)}{\xi},\ t\in[0,1].
$$
 In order to prove the lemma by the method of induction, it suffices to check that
$$
G(\Delta g(t_1),\ldots,\Delta g(t_{k-1})) \sim\prod^{k-1}_{i=1}\|\Delta g(t_i)\|^2,\ \text{as}\ (t_{k}-t_1) \to 0.
$$
Note that
\begin{align*}
  G(\Delta g(t_1),\ldots,\Delta g(t_{k-1}))&=\Var(\Delta x(t_1)) \prod_{i=1}^{k-2}\Var(\Delta x(t_{i+1})\mid \Delta x(t_1), \Delta x(t_2), \ldots,\Delta x(t_i)),
\end{align*}
where $\Delta x(t_i) \coloneq x(t_{i+1})-x(t_i)$.
Hence, we have
\begin{align}
 G(\Delta g(t_1),\ldots,\Delta g(t_{k-1}))&{}\leqslant \prod^{k-1}_{i=1}\Var(\Delta x(t_i)) \nonumber \\
 &{}=\prod^{k-1}_{i=1}\|\Delta g(t_i)\|^2 \nonumber \\
  &{} = \prod^{k-1}_{i=1}\left(\int^{t_{i+1}}_{t_i}K^2(t_{i+1}-r)\differential{r}+\int^{t_i}_0(K(t_{i+1}-r)-K(t_i-r))^2\differential{r}\right), \label{eq:G_delta_g_upper}
\end{align}
and in the other direction, 
\begin{align}
  G(\Delta g(t_1),\ldots,\Delta g(t_{k-1}))&{}\geqslant \prod^{k-1}_{i=1}\Var(\Delta x(t_i)| W(t_1), W(t_2),\ldots, W(t_i)) \nonumber  \\
  &{} = \prod^{k-1}_{i=1}\int^{t_{i+1}}_{t_i}K^2(t_{i+1}-r)\differential{r}. \label{eq:G_delta_g_lower}
\end{align}

 It follows from  \eqref{eq:G_delta_g_upper} and \eqref{eq:G_delta_g_lower}  that
 $$
 G(\Delta g(t_1),\ldots,\Delta g(t_{k-1}))\sim K^{2(k-1)}(0)\prod^{k-1}_{i=1}(t_{i+1}-t_i),
  $$
as  $(t_k-t_1) \to 0.$
 Since we have 
 $$
 \prod^{k-1}_{i=1}\|\Delta g(t_i)\|^2\sim K^{2(k-1)}(0)\prod^{k-1}_{i=1}(t_{k+1}-t_k),
  $$
as  $(t_k-t_1)\to 0,$ we immediately conclude that 
  $$
  G(\Delta g(t_1),\ldots,\Delta g(t_{k-1}))\sim\prod^{k-1}_{i=1}\|\Delta g(t_i)\|^2, 
  $$
as $(t_k-t_1) \to 0$,  which completes the proof.
\end{proof}

We are now ready to construct the Rosen renormalized self-intersection local time for a planar Volterra Gaussian process. Let $\xi_1,\ \xi_2$ be two independent white noises in $L_2([0,1])$ and $g(t)(\cdot)\coloneq K(t-\cdot)1_{[0,t]}(\cdot),\ t\in[0,1].$  Consider a planar Volterra Gaussian process defined as follows
$$
Y(t)\coloneq (Y_1(t), Y_2(t))=(\inner{g(t)}{\xi_1},\inner{g(t)}{\xi_2}),\ t\in[0,1].
$$
Consider the formal expression for the $k$-multiple self-intersection local time of $Y$ as
\begin{align*}
T_{k}^{Y}\coloneq \int_{\Delta_k}\left(\prod^{k-1}_{i=1}\delta_{0}(Y(t_{i+1})-Y(t_i))\right) \prod_{i=1}^{k}\differential{t_i},
\end{align*}
and its approximation
$$
T_{\ve, k}^{Y}\coloneq \int_{\Delta_k}\left(\prod^{k-1}_{i=1}f_{\ve}(Y(t_{i+1})-Y(t_i))\right) \prod_{i=1}^{k}\differential{t_i}.
$$
In order to present the formal Fourier--Wiener transform of $T_{\ve, k}^{Y}$, we need the following technical lemma proved in \cite{Dorogovtsev2011regularization}. Denote by $A(e_1,\ldots,e_n)$ the Gram matrix constructed by the elements  $e_1,\ldots,e_n$ of a Hilbert space $H$ and by $P_{e_1,\ldots,e_n},$ the projection in $H$ onto the linear subspace generated by the elements $e_1,\ldots,e_n.$ 

 \begin{lemma}
\label{lem:projection} Let $e_1,\ldots, e_n$ be a collection of linearly independent elements of a Hilbert space $H.$ Then, for any $h\in H$ and
$u=(\inner{e_1}{h},\ldots,\inner{e_n}{h})$, the following relation holds
$$
\inner{A^{-1}({e_1,\ldots,e_n})u}{u}=\|P_{e_1,\ldots,e_n}h\|^2.
$$
\end{lemma}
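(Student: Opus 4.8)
The plan is to identify the coefficient vector of the orthogonal projection $P_{e_1,\ldots,e_n}h$ in the system $e_1,\ldots,e_n$ with $A^{-1}(e_1,\ldots,e_n)u$, and then to evaluate the squared norm of this projection. First, I note that since $e_1,\ldots,e_n$ are linearly independent, the Gram matrix $A(e_1,\ldots,e_n)$ is symmetric and positive definite, hence invertible, so that $A^{-1}(e_1,\ldots,e_n)$ is well defined; this is the only place where the linear independence hypothesis enters.

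Next, I would write $P_{e_1,\ldots,e_n}h=\sum_{j=1}^{n}c_j e_j$ with a uniquely determined coefficient vector $c=(c_1,\ldots,c_n)\in\mbR^n$. By the defining property of the orthogonal projection, the residual $h-P_{e_1,\ldots,e_n}h$ is orthogonal to the linear span of $e_1,\ldots,e_n$; taking the inner product with each $e_i$ yields the normal equations $\sum_{j=1}^{n}c_j\inner{e_i}{e_j}=\inner{e_i}{h}$, that is, $A(e_1,\ldots,e_n)\,c=u$, and therefore $c=A^{-1}(e_1,\ldots,e_n)\,u$.

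Finally, since $P_{e_1,\ldots,e_n}h$ lies in the span while $h-P_{e_1,\ldots,e_n}h$ is orthogonal to it, one has $\norm{P_{e_1,\ldots,e_n}h}^2=\inner{P_{e_1,\ldots,e_n}h}{h}=\sum_{j=1}^{n}c_j\inner{e_j}{h}=\sum_{j=1}^{n}c_j u_j=\inner{A^{-1}(e_1,\ldots,e_n)u}{u}$, which is the asserted identity. There is no genuine obstacle here: the statement is a routine fact about Gram matrices, and the only point requiring a little care is the bookkeeping of conventions — whether the Gram matrix is formed with entries $\inner{e_i}{e_j}$ or $\inner{e_j}{e_i}$, and, over $\mathbb{C}$, where the complex conjugates are placed — but in the real Hilbert space $L_2([0,1])$ relevant to this paper the Gram matrix is symmetric and the computation above goes through verbatim.
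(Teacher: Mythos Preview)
Your proof is correct and is exactly the standard derivation via the normal equations; the paper itself omits the proof of this lemma, referring instead to \cite[Lemma~1]{Dorogovtsev2011regularization}, so there is nothing further to compare against.
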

We omit the proof of Lemma~\ref{lem:projection} as it can be proved using similar arguments as in \cite[Lemma~1]{Dorogovtsev2011regularization}. 

Let $\alpha$ be a square integrable random variable measurable with respect to white noises $\xi_1$ and $\xi_2$. Let us denote the Fourier--Wiener transform of $\alpha$ by $\mathcal{T}(\alpha).$ See Definition~\ref{def:fourier_wiener} in Appendix~\ref{sec:white_noise}. Then, for $h_1, h_2 \in H$,  we can define the Fourier--Wiener transform of $\prod^{k-1}_{i=1}\delta_{0}(Y(t_{i+1})-Y(t_i))$ in $T_{k}^{Y}$ as the limit of the Fourier--Wiener transforms of the approximating products $\prod^{k-1}_{i=1}f_{\ve}(Y(t_{i+1})-Y(t_i))$ in $T_{\ve, k}^{Y}$ as $\ve\to0$:
\begin{align*}
  &\mathcal{T}\Big(\prod^{k-1}_{i=1}\delta_0(Y(t_{i+1})-Y(t_i))\Big)(h_1,h_2)\\
  &{}=\lim_{\ve\to0}\mathcal{T}\Big(\prod^{k-1}_{i=1}f_{\ve}(Y(t_{i+1})-Y(t_i))\Big)(h_1,h_2)\\
  &{} = \frac{1}{(2\pi)^{k-1}\det \Cov\left(\Delta Y_1(t_1),\ldots,\Delta Y_1(t_{k-1})\right)}\myExp{-\frac{1}{2}\left(\inner{A^{-1}_{t_1,\ldots,t_k}u_1}{u_1}+\inner{A^{-1}_{t_1,\ldots,t_k}u_2}{u_2}\right)},
\end{align*}
where we use notations
\begin{align*}
  A_{t_1,\ldots,t_k}\coloneq A(\Delta g(t_1),\ldots,\Delta g(t_{k-1})),\quad P_{t_1,\ldots,t_k}\coloneq P_{\Delta g(t_1),\ldots,\Delta g(t_{k-1})}, 
\end{align*}
and $$u_i=(\inner{\Delta g(t_1)}{h_i},\ldots,\inner{\Delta g(t_{k-1})}{h_i}),\ i=1,2.$$
Applying Lemma~\ref{lem:projection}, we conclude that the formal Fourier--Wiener transform of 
$T_{k}^{Y}$ has the following form
\begin{equation}
\label{eq4.3}
\int_{\Delta_k}\frac{1}{(2\pi)^{k-1}G(\Delta g(t_1),\ldots,\Delta g(t_{k-1}))}\myExp{-\frac{1}{2}(\|P_{t_1\ldots t_k}h_1\|^2+\|P_{t_1\ldots t_k}h_2\|^2)}\prod_{i=1}^{k}\differential{t_i}.
\end{equation}
Unfortunately, the integral \eqref{eq4.3} diverges. Therefore, we need to construct a regularization of this divergent integral, which we do in the next section. 

\subsection{Construction of the regularization}
\label{sec:regularization}
In order to intuit a regularization for the planar Volterra Gaussian process $Y$, we first consider the case of a planar Wiener process
$$
\mathcal{W}(t)=( \inner{1_{[0,t]}}{\xi_1}, \inner{1_{[0,t]}}{\xi_2}),\ t\in[0,1].
$$ 
Note that for a planar Wiener process $\mathcal{W}$, the functions $\Delta g(t_i)=1_{[t_i,t_{i+1}]}$, for  $i=1,\ldots, k-1$ are orthogonal. It implies that the formal Fourier--Wiener transform of the Rosen renormalized self-intersection local time of planar Wiener process has the form
$$
\int_{\Delta_k}\frac{1}{(2\pi)^{k-1}\prod^{k-1}_{i=1}(t_{i+1}-t_i)}\prod^{k-1}_{i=1} \left(\myExp{-\frac{1}{2}(\|P_{t_{i}t_{i+1}}h_1\|^2+\|P_{t_{i}t_{i+1}}h_2\|^2)}-1\right)\prod_{i=1}^{k}\differential{t_i}.
$$
 See \cite{Dorogovtsev2011regularization} for more details. 


Let us return to the case of a  planar Volterra Gaussian process $Y$. The following statement describes the Rosen renormalized Fourier--Wiener transform of the $k$-multiple self-intersection local time for the planar Volterra Gaussian process $Y$.

\begin{theorem}
\label{thm:Fourier_Wiener} Consider $g(t)(\cdot)=K(t-\cdot)1_{[0,t]}(\cdot),\ t\in[0,1],$ where $K$ is a continuously differentiable function on $[0,1]$ with
$K(0)\neq 0.$ Let $\tilde{\Delta} g(t_1),\ldots, \tilde{\Delta} g(t_{k-1})$ be the orthogonal system obtained from $\Delta g(t_1),\ldots, \Delta g(t_{k-1})$ via the Gram--Schmidt orthogonalization procedure. Then, the Rosen renormalized self-intersection local time of multiplicity $k$ exists for any $k\geqslant 2$, i.e.,  the following integral 
$$
\int_{\Delta_k}\prod^{k-1}_{i=1}\frac{1}{\|\tilde{\Delta} g(t_i)\|^2}\left(\myExp{-\frac{1}{2}(\|P_{\tilde{\Delta} g(t_i)} h_1\|^2+\|P_{\tilde{\Delta} g(t_i)} h_2\|^2)}-1\right)\prod_{i=1}^{k}\differential{t_i}
$$
converges for any $h_1,h_2\in L_2([0,1])$. 

\end{theorem}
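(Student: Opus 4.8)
The plan is to reduce the statement to the absolute convergence of the displayed deterministic integral, bound its integrand factor by factor, and dominate it near the diagonals of $\Delta_k$ by the Rosen-renormalized integrand of a planar Wiener process, which is integrable by \cite{rosen1986renormalized} (see also \cite{Dorogovtsev2011regularization}). The first step is bookkeeping: by the Gram--Schmidt procedure, $G(\Delta g(t_1),\ldots,\Delta g(t_{k-1}))=\prod_{i=1}^{k-1}\norm{\tilde{\Delta} g(t_i)}^{2}$, and, since $\tilde{\Delta} g(t_1),\ldots,\tilde{\Delta} g(t_{k-1})$ form an orthogonal basis of the span of $\Delta g(t_1),\ldots,\Delta g(t_{k-1})$, $\norm{P_{t_1\ldots t_k}h}^{2}=\sum_{i=1}^{k-1}\norm{P_{\tilde{\Delta} g(t_i)}h}^{2}$ for every $h\in L_2([0,1])$. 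Substituting these into \eqref{eq4.3} factorises the product of exponentials, and the Rosen renormalization---replacing $\prod_i\myExp{-(\cdots)}$ by $\prod_i\bigl(\myExp{-(\cdots)}-1\bigr)$, which is exactly what centring each $f_\ve$ produces---turns \eqref{eq4.3} into the integral in the statement (the constant $(2\pi)^{-(k-1)}$ being irrelevant for convergence). So it suffices to prove that integral converges absolutely.

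The second step is a uniform comparability estimate: for all $0\leqslant t_i<t_{i+1}\leqslant1$,
$$
c\,(t_{i+1}-t_i)\ \leqslant\ \norm{\tilde{\Delta} g(t_i)}^{2}\ \leqslant\ \norm{\Delta g(t_i)}^{2}\ \leqslant\ C\,(t_{i+1}-t_i),
$$
with $0<c\leqslant C<\infty$ depending only on $K$. For the lower bound, since the span of $\Delta g(t_1),\ldots,\Delta g(t_{i-1})$ consists of functions supported in $[0,t_i]$ while $\Delta g(t_i)$ restricted to $(t_i,t_{i+1}]$ equals $K(t_{i+1}-\cdot)$, one gets $\norm{\tilde{\Delta} g(t_i)}^{2}\geqslant\int_0^{t_{i+1}-t_i}K^{2}(u)\differential{u}$, which is $\geqslant c\,(t_{i+1}-t_i)$ because $K$ is continuous and $K(0)\neq0$; the upper bound is the explicit expression for $\norm{\Delta g(t_i)}^{2}$ from the proof of Lemma~\ref{lem:g_str_loc_nondet} together with the Lipschitz bound on $K$. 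Using $|\myExp{-x}-1|\leqslant\min(1,x)$ for $x\geqslant0$ and $\norm{P_{\tilde{\Delta} g(t_i)}h}^{2}=\inner{\tilde{\Delta} g(t_i)}{h}^{2}/\norm{\tilde{\Delta} g(t_i)}^{2}$, the integral of interest is dominated by a constant multiple of $\int_{\Delta_k}\prod_{i=1}^{k-1}(t_{i+1}-t_i)^{-1}\min\bigl(1,(t_{i+1}-t_i)^{-1}\sum_{j=1,2}\inner{\tilde{\Delta} g(t_i)}{h_j}^{2}\bigr)\prod_{i=1}^{k}\differential{t_i}$, so the only potential divergence comes from the factors $(t_{i+1}-t_i)^{-1}$ when a gap shrinks.

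The crux is to show those factors are compensated. From $\Delta g(t_i)=K(t_{i+1}-\cdot)1_{(t_i,t_{i+1}]}+\bigl(K(t_{i+1}-\cdot)-K(t_i-\cdot)\bigr)1_{[0,t_i]}$, the Lipschitz bound on $K$, and the fact that the Gram--Schmidt correction to $\Delta g(t_i)$ is supported in $[0,t_i]$ with $L_2$-norm $\leqslant L\,(t_{i+1}-t_i)$ (where $L$ bounds $|K'|$), one obtains $\norm{\tilde{\Delta} g(t_i)-K(0)\,1_{[t_i,t_{i+1}]}}\leqslant C_1\,(t_{i+1}-t_i)$, uniformly in the other nodes. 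Hence $\inner{\tilde{\Delta} g(t_i)}{h_j}/(t_{i+1}-t_i)=K(0)\,(t_{i+1}-t_i)^{-1}\int_{t_i}^{t_{i+1}}h_j(s)\differential{s}+O(\norm{h_j})$: by the Lebesgue differentiation theorem the local average stays bounded as the gap shrinks (for a.e.\ value of the adjacent node), and the $(t_{i+1}-t_i)^{-1}$-weighted squares of such averages integrate to finite quantities over $\Delta_k$ by Hardy's inequality and Fubini. Substituting the approximation $\tilde{\Delta} g(t_i)\approx K(0)1_{[t_i,t_{i+1}]}$ into the integrand then bounds it, up to a multiplicative constant, by the sum of the Rosen-renormalized integrand for the planar Wiener process, $\prod_{i=1}^{k-1}(t_{i+1}-t_i)^{-1}\bigl|\myExp{-\frac12\sum_{j=1,2}\norm{P_{1_{[t_i,t_{i+1}]}}h_j}^{2}}-1\bigr|$, whose integral over $\Delta_k$ converges by \cite{rosen1986renormalized,Dorogovtsev2011regularization}, plus finitely many products of functions of adjacent pairs $(t_i,t_{i+1})$ that carry extra powers of $t_{i+1}-t_i$ and are therefore integrable over $\Delta_k$ by the same elementary estimates. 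Finally, the same bounds, being uniform in $\ve$, legitimise passing to the limit $\ve\to0$ under the integral, which yields the convergence in the statement and, via the Fourier--Wiener isometry, the existence of $L_k^{Y}$ as an $L_2$-limit.

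The step I expect to be the main obstacle is precisely this last comparison: the Gram--Schmidt orthogonalization couples the $i$-th factor to $t_1,\ldots,t_{i+1}$, so the estimate $\tilde{\Delta} g(t_i)\approx K(0)1_{[t_i,t_{i+1}]}$ and the ensuing domination must be made uniform over all remaining nodes, and the orthogonalization errors accumulated over the $k-1$ steps must be shown not to destroy integrability. This uniform control is exactly what the strong local nondeterminism of $g$ (Lemma~\ref{lem:g_str_loc_nondet}), hence the hypothesis $K(0)\neq0$, provides.
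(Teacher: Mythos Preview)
Your proposal is essentially correct and rests on the same analytic tool as the paper, but the route is organised differently. The paper bounds $1-e^{-x}\leqslant x$ at once, obtaining
\[
\mathcal{I}\ \leqslant\ c\int_{\Delta_k}\prod_{i=1}^{k-1}\frac{\inner{h}{\tilde{\Delta} g(t_i)}^{2}}{\norm{\tilde{\Delta} g(t_i)}^{4}}\prod_{i=1}^{k}\differential{t_i},
\]
and then integrates one variable at a time directly on the Volterra integrand: writing $\tilde{\Delta} g(t_{k-1})=\Delta g(t_{k-1})-P_{t_1\ldots t_{k-1}}\Delta g(t_{k-1})$, it shows $\int_{t_{k-1}}^{1}\frac{\inner{h}{\tilde{\Delta} g(t_{k-1})}^{2}}{(t_k-t_{k-1})^{2}}\differential{t_k}\leqslant c\,\norm{h}^{2}$ by splitting $\Delta g(t_{k-1})$ into its pieces on $(t_{k-1},t_k]$ and on $[0,t_{k-1}]$ and applying the Schur test to the kernel $\mathcal{K}(s_1,s_2)=(s_2-t_{k-1})^{-1}1_{\{s_2>s_1\}}$ with weights $p=q=(s-t_{k-1})^{-1/2}$. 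Iterating gives the bound $c_6\norm{h}^{2k}$. Your route instead first establishes the uniform $L_2$-approximation $\norm{\tilde{\Delta} g(t_i)-K(0)1_{[t_i,t_{i+1}]}}\leqslant C_1(t_{i+1}-t_i)$ (which is correct, since the Gram--Schmidt correction lies in the span of functions supported on $[0,t_i]$ and is therefore bounded by $\norm{\Delta g(t_i)\cdot 1_{[0,t_i]}}\leqslant L(t_{i+1}-t_i)$), and then reduces to the Wiener integrand plus cross terms. Both routes ultimately hinge on the same Hardy-type inequality (your Hardy's inequality is exactly the paper's Schur test with the weights above); the paper's is more self-contained, while yours makes the comparison with the Wiener case explicit at the cost of an expansion into $2^{k-1}$ terms.

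Two small points. First, the appeal to the Lebesgue differentiation theorem is a red herring: pointwise boundedness of local averages is neither needed nor sufficient; the integrability over $\Delta_k$ is carried entirely by Hardy's inequality applied iteratively, which you do mention. Second, your description of the cross terms as ``carrying extra powers of $t_{i+1}-t_i$'' is slightly misleading: in a mixed term $\prod_{i\in S}A_i\prod_{i\notin S}B_i$ only the $B$-factors become bounded constants, while the surviving $A_i=\tfrac{C}{(t_{i+1}-t_i)^{2}}\inner{h}{1_{[t_i,t_{i+1}]}}^{2}$ still require the same Hardy estimate (now over the sub-product indexed by $S$), not merely power-counting. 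This is harmless once stated correctly. Finally, the ``main obstacle'' you anticipate---the coupling of $\tilde{\Delta} g(t_i)$ to the earlier nodes through Gram--Schmidt---is already neutralised by your own uniform approximation and by the lower bound $\norm{\tilde{\Delta} g(t_i)}^{2}\geqslant\int_{0}^{t_{i+1}-t_i}K^{2}(u)\differential{u}\geqslant c\,(t_{i+1}-t_i)$, both of which hold independently of $t_1,\ldots,t_{i-1}$; no further use of strong local nondeterminism is needed at this stage.
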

\begin{proof}[Proof of Theorem~\ref{thm:Fourier_Wiener}] It suffices to check that for any $h\in L_2([0,1])$ the following integral 
\begin{align}
  \mathcal{I}=&{}\int_{\Delta_k}\prod^{k-1}_{i=1}\frac{1}{\|\tilde{\Delta} g(t_i)\|^2}\left(1-\myExp{-\frac{1}{2\|\tilde{\Delta} g(t_i)\|^2}\inner{h}{\tilde{\Delta} g(t_i)}^2}\right)\prod_{i=1}^{k}\differential{t_i} \label{eq4.5}
\end{align}
converges. 
Note that
\begin{align*}
  \mathcal{I} &{} \leqslant c\int_{\Delta_k}\prod^{k-1}_{i=1}\frac{\inner{ h }{\tilde{\Delta} g(t_i)}^2}{\|\tilde{\Delta} g(t_i)\|^4}\prod_{i=1}^{k}\differential{t_i},
\end{align*}
where $c$ is a positive constant. Now,  let us first integrate with respect to $t_k.$ Consider the integral 
$$
\int^1_{t_{k-1}}\frac{\inner{h}{\tilde{\Delta} g(t_{k-1})}^2}{\|\tilde{\Delta} g(t_{k-1})\|^4}\differential{t_k}.
$$
 Note that we have 
 $$
 \tilde{\Delta} g(t_{k-1})=\Delta g(t_{k-1})-P_{t_1\ldots t_{k-1}}\Delta g(t_{k-1}).
 $$
It follows from Lemma~\ref{lem:projection} that
 \begin{align*}
  \norm{\Delta g(t_{k-1})-P_{t_1\ldots t_{k-1}}\Delta g(t_{k-1})}^2 &= \frac{G(\Delta g(t_1), \ldots, \Delta g(t_{k-1}))} {G(\Delta g(t_1), \ldots, \Delta g(t_{k-2}))}\sim K^2(0)(t_{k}-t_{k-1}),
\end{align*}
as $(t_k-t_1) \to 0.$ Therefore, we have 
\begin{align*}
  \int^1_{t_{k-1}}
\frac{\inner{h}{\tilde{\Delta} g(t_{k-1})}^2 }{\|\tilde{\Delta} g(t_{k-1})\|^4}\differential{t_k}&{}\leq c_1\int^1_{t_{k-1}}
\frac{\inner{h}{\Delta g(t_{k-1})-P_{t_1\ldots t_{k-1}}\Delta g(t_{k-1})}^2
}
{(t_{k}-t_{k-1})^2}\differential{t_k}\\
&{} \leq 2c_1\left(\int^1_{t_{k-1}}
\frac{
  \inner{ h}{ \Delta g(t_{k-1})}^2
}
{(t_{k}-t_{k-1})^2}\differential{t_k} \right.\\
&{}\quad\quad\quad   \left. +
\int^{1}_{t_{k-1}}
\frac{\inner{h}{P_{t_1\ldots t_{k-1}}\Delta g(t_{k-1})}^2
}
{(t_{k}-t_{k-1})^2}\differential{t_k}
\right),
\end{align*}
for some $c_1>0$. 
%
Now, assume that $h\geq0.$  Let us check that there exists a positive constant $c_2$ such that
$$
\int^1_{t_{k-1}}
\frac{\inner{h}{\Delta g(t_{k-1})}^2
}
{(t_{k}-t_{k-1})^2}\differential{t_k}\leq c_2\|h\|^2.
$$
Now, note that
\begin{align*}
  &\int^1_{t_{k-1}} \frac{\inner{h}{\Delta g(t_{k-1})}^2}{(t_k-t_{k-1})^2}\differential{t_k} \\
  &{} = \int^1_{t_{k-1}}\frac{\left(\int^{t_k}_{t_{k-1}}h(s)K(t_k,s)\differential{s}+\int^{t_{k-1}}_0h(s)(K(t_k,s)-K(t_{k-1},s))\differential{s}\right)^2}{(t_k-t_{k-1})^2}\differential{t_k} \\
&{}
\leqslant 2\int^1_{t_{k-1}}
  \left(\int^{t_{k}}_{t_{k-1}}
  h(s)K(t_k,s)\differential{s}\right)^2
  \frac{1}{(t_k-t_{k-1})^2}\differential{t_k} \\
   &{}\quad 
  +2\int^1_{t_{k-1}}
  \left(\int^{t_{k-1}}_{0}
  h(s)(K(t_k,s)-K(t_{k-1},s))\differential{s}\right)^2
  \frac{1}{(t_k-t_{k-1})^2}\differential{t_k}.
\end{align*}
It is not difficult to see that there exists a positive constant $c_3$ such that
\begin{align*}
  \int^1_{t_{k-1}}\left(\int^{t_{k-1}}_{0}h(s)(K(t_k,s)-K(t_{k-1},s))\differential{s}\right)^2\frac{1}{(t_k-t_{k-1})^2}\differential{t_k}\leqslant c_3\|h\|^2.
\end{align*}
Now, let us check that the same estimate holds for the first summand. Indeed, we have 
\begin{align*}
  & \int^1_{t_{k-1}} \int^{t_k}_{t_{k-1}}\int^{t_k}_{t_{k-1}}h(s_1)h(s_2)K(t_k,s_1)K(t_k,s_2)\differential{s_1}\differential{s_2} \frac{1}{(t_k-t_{k-1})^2}\differential{t_k}\\
  &{} \leqslant c_4 \int^{1}_{t_{k-1}}\int^{1}_{t_{k-1}}
  h_1(s_1)h_1(s_2)
  \int^1_{s_1\vee s_2}
  \frac{1}{(t_k-t_{k-1})^2}\differential{t_k}
  \differential{s_1}\differential{s_2} \\
  &{} \leqslant c_4\int^1_{t_{k-1}}\int^1_{t_{k-1}}
  h_1(s_1)h_1(s_2)
  \frac{1}{s_1\vee s_2-t_{k-1}}
  \differential{s_1}\differential{s_2}\\
  &{} =
  2c_4\int^1_{t_{k-1}}h_1(s_1)\int^1_{s_1}
  \frac{h_1(s_2)}{s_2-t_{k-1}}\differential{s_2}\differential{s_1}.  
\end{align*}
Here $a\vee b$ denotes $\max\{a,b\}.$ Consider in $L_2([t_{k-1}, 1])$  the integral operator with the kernel 
$$
\mathcal{K}(s_1, s_2)=\frac{1}{s_2-t_{k-1}}1_{\{s_2>s_1\}}.
$$
Using the Shur test \cite{halmosh1987bounded} (see Lemma~\ref{lem:Shur_test} in Appendix~\ref{appendix:extra}) we verify that $\mathcal{K}$  defines a bounded operator in  $L_2([t_{k-1}, 1])$.
To that end, put 
$$
p(s_1)=\frac{1}{\sqrt{s_1-t_{k-1}}},\quad \text{ and }\quad
q(s_2)=\frac{1}{\sqrt{s_2-t_{k-1}}}.
$$
Then, we have 
$$
\int^1_{t_{k-1}}\mathcal{K}(s_1, s_2)q(s_2)\differential{s_2} =\int^1_{s_1}
\frac{1}{(s_2-t_{k-1})^{3/2}}\differential{s_2}\leq
2\frac{1}{\sqrt{s_1-t_{k-1}}}, 
$$
and
$$
\int^1_{t_{k-1}}
\mathcal{K}(s_1, s_2)p(s_1)\differential{s_1} = \frac{1}{s_2-t_{k-1}}\int^{s_2}_{t_{k-1}}
\frac{1}{\sqrt{s_1-t_{k-1}}}\differential{s_1}
=
\frac{2}{\sqrt{s_2-t_{k-1}}}.
$$
Hence, by virtue of the Shur test (Lemma~\ref{lem:Shur_test}), we have
$$
\int^1_{t_{k-1}}h_1(s_1)\int^1_{s_1}
\frac{h_1(s_2)}{s_2-t_{k-1}}\differential{s_2}\differential{s_1}\leq 4\|h\|^2.
$$
Using  similar arguments, we conclude that
\begin{align*}
 &{}\int^{t_k}_{t_{k-1}}
\frac{\inner{h}{P_{t_1\ldots t_{k-1}}\Delta g(t_{k-1})}^2}
{(t_{k}-t_{k-1})^2}\differential{t_k}\\
 &{}=\int^{t_k}_{t_{k-1}}
\frac{\inner{P_{t_1\ldots t_{k-1}}h}{\Delta g(t_{k-1})}^2}
{(t_{k}-t_{k-1})^2}\differential{t_k}\leqslant c_5\|h\|^2,\ c_5>0.
\end{align*}
Hence, there exists a positive constant $c_2$ such that
$$
\int^1_{t_{k-1}}
\frac{\inner{h}{\Delta g(t_{k-1})}^2
}
{(t_{k}-t_{k-1})^2}\differential{t_k}\leq c_2\|h\|^2.
$$
Using similar arguments, we deduce that
$$
\int_{\Delta_k}
\prod^{k-1}_{i=1}
\frac{\inner{h}{\tilde{\Delta} g(t_i)}^2}
{t_{i+1}-t_i} \prod_{i=1}^{k} \differential{t_i} \leq c_6\|h\|^{2k},\ c_6>0,
$$
which completes the proof of the theorem.
\end{proof}
We are now ready to present the It\^o--Wiener expansion for the Rosen renormalized $k$ multiple self-intersection local time of planar Volterra Gaussian process. We need a few new notations, which we introduce below. For a function $f:[0,1]\to\mathbb{R},$ denote by
$$
f^{\bigotimes k}(s_1,\ldots,s_k)=\prod^{k}_{i=1}f(s_i),\ s_1,\ldots,s_k\in [0,1].
$$
For nonnegative integers $n, k$, define the sets 
\begin{align*}
  \Lambda({n, k}) &\coloneqq \{m = (m_1, m_2, \ldots, m_k) \in \mbN_0^k \mid \sum_{i=1}^{k} m_i=n  \}, \\
  \Lambda_{+}({n, k}) &\coloneqq \{m = (m_1, m_2, \ldots, m_k) \in \mbN^k \mid  \sum_{i=1}^{k} m_i=n  \}. 
\end{align*}

 \begin{lemma}
\label{lem:It\^o--Wiener expansion} The It\^o--Wiener expansion for the Rosen renormalized $k$-multiple self-intersection local time of the planar Volterra Gaussian process $Y$ has the following representation
\begin{align*}
&{}\tilde{T}^{Y}_k=\sum^{\infty}_{n=k-1}\sum^{n}_{p=0}\int_{\Delta_{2n}}\alpha_{2n}(s_1,\ldots,s_{2n})\differential{W_1(s_1)}\ldots \differential{W_1(s_{2p})}\differential{W_2(s_{2p+1})}\ldots\differential{W_2(s_{2n})},
 \end{align*}
 where the coefficients $\alpha_{2n}(s_1,\ldots,s_{2n})$ are given by
 \begin{align*}
  &{}\alpha_{2n}(s_1,\ldots,s_{2n}) 
  \coloneqq \frac{(2n)!}{(2\pi)^{k-1}}\left(-\frac{1}{2}\right)^{n}\sum_{m \in \Lambda_{+}(n-1, k-1)}\sum_{l\in \Lambda(p, k-1)}  \beta_{2n}^{m, l}(s_1, s_2, \ldots, s_{2n}) \\
\end{align*}
with  
\begin{align*}
  &\beta_{2n}^{m, l}(s_1, s_2, \ldots, s_{2n}) \\ &{} \coloneqq  \int_{\Delta_k}\prod^{k-1}_{i=1} \left[ \binom{m_i}{l_i}(\tilde{\Delta}g(t_i))^{\bigotimes 2l_i}(s_1,\ldots,s_{2l_i})(\tilde{\Delta}g(t_i))^{\bigotimes 2m_i-2l_i}(s_{2l_i+1},\ldots,s_{2m_i-2l_i}) \right.\\ 
  &{}\quad\quad\quad  \left. \times 
  \frac{1}{ m_i! \,  \|\tilde{\Delta}g(t_i)\|^{2n+2}  }\right] \prod^{k}_{i=1}\differential{t_i},
\end{align*}
for $m = (m_1, m_2, \ldots, m_{k-1}) \in \Lambda_{+}(n-1, k-1)$ and $l = (l_1, l_2, \ldots, l_{k-1}) \in \Lambda(p, k-1)$.

  \end{lemma}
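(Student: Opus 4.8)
The plan is to recover the expansion by inverting the Fourier--Wiener transform $\mathcal{T}$. Recall that $\mathcal{T}$ is injective on $L_2(\Omega)$ and that, with the normalization of Definition~\ref{def:fourier_wiener}, it sends a symmetric multiple Wiener integral of order $n$ against $W_1$ and order $m$ against $W_2$, with kernel $c$, to the map $(h_1,h_2)\mapsto\lambda^{n+m}\inner{c}{h_1^{\bigotimes n}\otimes h_2^{\bigotimes m}}$, where $\lambda$ is the accompanying constant ($1$ or $\mathrm{i}$, depending on the chosen normalization). Hence any square integrable functional is determined by expanding its transform into a sum of monomials $(h_1,h_2)\mapsto\inner{a}{h_1^{\bigotimes 2p}\otimes h_2^{\bigotimes 2q}}$ and matching coefficients. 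Now, Theorem~\ref{thm:Fourier_Wiener} already provides $\mathcal{T}(\tilde T^{Y}_k)(h_1,h_2)$ in closed form --- the integral over $\Delta_k$ of $\prod_{i=1}^{k-1}\|\tilde\Delta g(t_i)\|^{-2}\big(\myExp{-\tfrac{1}{2}\|P_{\tilde\Delta g(t_i)}h_1\|^2-\tfrac{1}{2}\|P_{\tilde\Delta g(t_i)}h_2\|^2}-1\big)$, with the overall factor $(2\pi)^{-(k-1)}$ of~\eqref{eq4.3} restored (it was suppressed there for brevity) --- so it remains to expand that integral.

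For the expansion, since $P_{\tilde\Delta g(t_i)}$ is the projection onto the line spanned by $\tilde\Delta g(t_i)$, we have $\|P_{\tilde\Delta g(t_i)}h_j\|^2=\inner{h_j}{\tilde\Delta g(t_i)}^2/\|\tilde\Delta g(t_i)\|^2$, so each factor of the integrand is a power series in $(h_1,h_2)$ starting at order two:
\begin{align*}
  &\frac{1}{\|\tilde\Delta g(t_i)\|^2}\Big(\myExp{-\tfrac{1}{2}\|P_{\tilde\Delta g(t_i)}h_1\|^2-\tfrac{1}{2}\|P_{\tilde\Delta g(t_i)}h_2\|^2}-1\Big)\\
  &\qquad=\sum_{m_i\geqslant1}\frac{(-1/2)^{m_i}}{m_i!\,\|\tilde\Delta g(t_i)\|^{2m_i+2}}\sum_{l_i=0}^{m_i}\binom{m_i}{l_i}\\
  &\qquad\qquad\times\inner{h_1}{\tilde\Delta g(t_i)}^{2l_i}\,\inner{h_2}{\tilde\Delta g(t_i)}^{2(m_i-l_i)},
\end{align*}
where the inner sum is the binomial expansion of $(\inner{h_1}{\tilde\Delta g(t_i)}^2+\inner{h_2}{\tilde\Delta g(t_i)}^2)^{m_i}$. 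Multiplying the $k-1$ such factors and integrating over $\Delta_k$ produces a sum over multi-indices $m=(m_1,\dots,m_{k-1})$ with all $m_i\geqslant1$ --- set $n:=\sum_i m_i$, so the corresponding term has total degree $2n$ in $(h_1,h_2)$ and $n\geqslant k-1$ --- and over $l=(l_1,\dots,l_{k-1})$ with $0\leqslant l_i\leqslant m_i$ and $p:=\sum_i l_i$. Rewriting $\inner{h_j}{\tilde\Delta g(t_i)}^{2r}=\inner{h_j^{\bigotimes 2r}}{(\tilde\Delta g(t_i))^{\bigotimes 2r}}$ and collecting the $\tilde\Delta g(t_i)$-tensors attached to $h_1$ (respectively $h_2$) into a single kernel on $[0,1]^{2p}$ (respectively $[0,1]^{2(n-p)}$), one is left with monomials $\inner{\beta_{2n}^{m,l}}{h_1^{\bigotimes 2p}\otimes h_2^{\bigotimes 2(n-p)}}$ whose kernels $\beta_{2n}^{m,l}$ are precisely those in the statement.

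It then remains to invert term by term and to repackage the monomials as multiple Wiener integrals. Up to the constant $\lambda^{2n}$ coming from $\mathcal{T}$, which combines with $\prod_i(-1/2)^{m_i}=(-1/2)^{n}$ to produce the sign and normalization in $\alpha_{2n}$, each monomial $\inner{\beta_{2n}^{m,l}}{h_1^{\bigotimes 2p}\otimes h_2^{\bigotimes 2(n-p)}}$ is the transform of the element of the $(2p,2(n-p))$-chaos with kernel $\beta_{2n}^{m,l}$; rewriting the symmetric multiple Wiener integral over $[0,1]^{2n}$ as the iterated It\^o integral over $\Delta_{2n}$ with the $2p$ $W_1$-differentials preceding the $2(n-p)$ $W_2$-differentials yields the factor $(2n)!$ and the stated shape, after symmetrising the kernel. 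Summing over $l$, $m$ and $n$ gives the claimed representation of $\tilde T^{Y}_k$. The main obstacle is making this term-by-term inversion rigorous: one must check that the multi-series converges absolutely for each fixed $(h_1,h_2)$ and --- the genuinely analytic part --- that $\sum_n\|\alpha_{2n}\|^2_{L_2(\Delta_{2n})}<\infty$, so that the chaos series converges in $L_2(\Omega)$ and defines a random variable. This is obtained by bounding $\|\alpha_{2n}\|_{L_2}$ and the generic term of the series with the same Schur-test and local-nondeterminism estimates used in the proof of Theorem~\ref{thm:Fourier_Wiener}, now retained in their full $m_i$-dependent form (rather than reduced via $1-e^{-x}\leqslant x$) and applied successively to the variables $t_k,\dots,t_1$. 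Once square-summability is in hand, injectivity of $\mathcal{T}$ identifies the chaos series with the $L_2$-limit of the renormalized approximations $\tilde T^{Y}_{\ve,k}$, and careful bookkeeping of the combinatorial constants --- the binomials, the $m_i!$, the $(2n)!$, the $(2\pi)^{-(k-1)}$ and the sign --- completes the verification.
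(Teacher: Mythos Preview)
Your proposal is correct and follows essentially the same route as the paper: both start from the Fourier--Wiener transform supplied by Theorem~\ref{thm:Fourier_Wiener}, expand each factor $\|\tilde\Delta g(t_i)\|^{-2}(\exp(\cdots)-1)$ as a power series in $\|P_{\tilde\Delta g(t_i)}h_1\|^2+\|P_{\tilde\Delta g(t_i)}h_2\|^2$, apply the binomial theorem to separate the $h_1$- and $h_2$-contributions, multiply the $k-1$ factors, regroup by total degree $n=\sum m_i$ and by $p=\sum l_i$, and then read off the chaos kernels. The paper's proof is in fact more formal than yours --- it simply writes out the same chain of series manipulations and concludes with ``rearranging the terms'' --- whereas you make explicit the analytic obligations (absolute convergence of the multi-series, square-summability of the $\alpha_{2n}$, and the injectivity argument identifying the limit), which the paper leaves implicit.
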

 \begin{proof}[Proof of Lemma~\ref{lem:It\^o--Wiener expansion}] 
 It follows from  Theorem~\ref{thm:Fourier_Wiener} that the  Fourier--Wiener transform of the Rosen renormalized $k$-multiple self-intersection local time of $Y$ has the  representation
\begin{align*}
 &{}  \int_{\Delta_k} \frac{1 }{(2\pi)^{k-1}} \prod^{k-1}_{i=1}\frac{1}{\|\tilde{\Delta}g(t_i)\|^2}\left(\exp\left(-\frac{1}{2}\left(\|P_{\tilde{\Delta}g(t_i)}h_1\|^2+\|P_{\tilde{\Delta}g(t_i)}h_2\|^2\right)\right)-1\right)\prod^k_{i=1}\differential{t_i}\\
 &{} =\int_{\Delta_k}\frac{1}{(2\pi)^{k-1}}\prod^{k-1}_{i=1}\left[\sum^{\infty}_{m=1}\frac{1}{m!}\left(-\frac{1}{2}\right)^m\left(\|P_{\tilde{\Delta}g(t_i)}h_1\|^2+\|P_{\tilde{\Delta}g(t_i)}h_2\|^2\right)^m\frac{1}{\|\tilde{\Delta}g(t_i)\|^2}\right] \prod^{k}_{i=1}\differential{t_i}\\
 &{} =\int_{\Delta_k}\frac{1}{(2\pi)^{k-1}}\prod^{k-1}_{i=1} \left[  \sum^{\infty}_{m=1}\frac{1}{m!}\left(-\frac{1}{2}\right)^m\sum^{m}_{l=0}\binom{m}{l}(P_{\tilde{\Delta}g(t_i)}h_1\|)^{2l}(\|P_{\tilde{\Delta}g(t_i)}h_2\|)^{2(m-l)} \right.\\
 &{}\quad \quad \left. \times \frac{1}{\|\tilde{\Delta}g(t_i)\|^2}\right] \prod^{k}_{i=1}\differential{t_i}\\
 &{} =\int_{\Delta_k}\frac{1}{(2\pi)^{k-1}}\sum^{\infty}_{m_1,\ldots,m_{k-1}=1}\frac{1}{m_1!\ldots m_{k-1}!}\left(-\frac{1}{2}\right)^{m_1+\ldots +m_{k-1}}\\
 &{} \quad  \sum_{0\leqslant l_1\leqslant m_1,\ldots,\ 0\leqslant l_{k-1}\leqslant m_{k-1}}\prod^{k-1}_{i=1}\binom{m_i}{l_i}\|P_{\tilde{\Delta}g(t_i)}h_1\|^{2l_i}\|P_{\tilde{\Delta}g(t_i)}h_2\|^{2m_i-2l_i}\frac{1}{\|\tilde{\Delta}g(t_i)\|^2}\prod^{k}_{i=1}\differential{t_i}\\
 &{}=\sum^{\infty}_{n=k-1}\int^1_0 \overset{2n}{\cdots}\int^1_0\int_{\Delta_k}\frac{1}{(2\pi)^{k-1}}\left(-\frac{1}{2}\right)^{n}\sum^{\infty}_{m_1+\ldots+m_{k-1}=n, m_1>0,\ldots,m_{k-1}>0}\frac{1}{m_1!\ldots m_{k-1}!}\\
&{}\quad \times  \sum^{n}_{p=0}\sum_{l_1+\ldots+l_{k-1}=p}\prod^{k-1}_{i=1}\binom{m_i}{l_i}\\ 
&{}\quad \times \int_{\Delta_k}\prod^{k-1}_{i=1}(\tilde{\Delta}g(t_i))^{\bigotimes 2l_i}(s_1,\ldots,s_{2l_i}) (\tilde{\Delta}g(t_i))^{\bigotimes 2m_i-2l_i}(s_{2l_i+1},\ldots,s_{2m_i-2l_i})\\
&{}\quad \times  h_1^{\bigotimes 2p}(s_1,\ldots,s_{2p}) h_2^{\bigotimes 2n-2p}(s_{2p+1},\ldots,s_{2n})\frac{1}{\|\tilde{\Delta}g(t_i)\|^{2n+2}}\prod^{k}_{i=1}\differential{t_i}\prod^{2n}_{j=1}\differential{s_j}.
\end{align*}
Hence, we get 
\begin{align*}
\tilde{T}^{Y}_k &{} = \sum^{\infty}_{n=k-1}\int^1_0\overset{2n}{\cdots}\int^1_0\frac{1}{(2\pi)^{k-1}}\left(-\frac{1}{2}\right)^{n}\sum^{\infty}_{m_1+\ldots+m_{k-1}=n-1, m_1>0,\ldots,m_{k-1}>0}\frac{1}{m_1!\ldots m_{k-1}!}\\
&{}\quad \times  \sum^{n}_{p=0} \sum_{l_1+\ldots+l_{k-1}=p}\prod^{k}_{i=1}\binom{m_i}{l_i}\\
 &{}\quad \times \int_{\Delta_k} \left[\prod^{k-1}_{i=1}(\tilde{\Delta}g(t_i))^{\bigotimes 2l_i}(s_1,\ldots,s_{2l_i})(\tilde{\Delta}g(t_i))^{\bigotimes 2m_i-2l_i}(s_{2l_i+1},\ldots,s_{2m_i-2l_i}) \right.\\ 
&{}\quad \left. \times \frac{1}{\|\tilde{\Delta}g(t_i)\|^{2n+2}}\prod^{k}_{i=1}\differential{t_i} \right]\,\differential{W_1(s_1)}\ldots \differential{W_1(s_{2p})}\differential{W_2(s_{2p+1})}\ldots\differential{W_2(s_{2n})}.
 \end{align*}
 Rearranging the terms, we obtain the desired result.
 
   \end{proof}

\section{Conclusions}
\label{sec:conclusions}

The law of the iterated logarithm for a Brownian motion was extended  to a certain class of Gaussian processes by H. Oodaira in \cite{Oodaira1972Strassen,Oodaira1973Law} and M. A. Arcones in   \cite{arcones1995law}. If $x(t),\ t\geqslant 0$ is a Gaussian process with the covariance $R(t,s),\ t,s\geqslant 0,$ then in \cite{Oodaira1972Strassen}   the law of the iterated logarithm was proved for a class of semi-stable Gaussian processes, i.e.,  a class of Gaussian processes for which there exists a positive function $v(r),\ r\geqslant 0$ such that $v(r)$ converges to infinity as $r\to\infty$ and
$$
R(rt,rs)=v(r)R(t,s),\ r,\ t,\ s\geqslant 0.
$$
If such a function $v$ exists, then it must be of the form $r^{\alpha},\ \alpha>0.$ Semi-stable Gaussian processes were studied by Lampertri in \cite{lamperti1962semi}. In \cite{Oodaira1973Law}, the semi-stability condition was replaced by the following asymptotic one:  there are a positive function $v(r),\ r\geqslant 0$ and a covariance kernel $\Gamma(t,s),\ t,\ s\geqslant 0$ such that 
$$
v^{-1}(r)R(rt,rs)\to\Gamma(t,s)\ \text{as}\ r\to\infty\ \text{uniformly in}\ t,\ s\in[0,1].
$$
An example of a not semi-stable process that satisfies the asymptotic condition is
$$
x(t)=\int^t_0Z(s)\differential{s},
$$
where $Z$ is a Ornstein--Uhlenbeck process. In \cite{arcones1995law}, the law of the iterated logarithm was established for a class of self-similar Gaussian processes, in particular, for a fractional Brownian motion with Hurst parameter $\frac{1}{2}\leqslant H<1.$ The law of the iterated logarithm for a fractional Brownian motion with Hurst parameter $0<H\leqslant\frac{1}{2}$ was proved in \cite{jiawei2018fine}. In this paper, we prove the law of the iterated logarithm for a class of Volterra Gaussian processes that, in general, do not satisfy any of mentioned conditions. 
 
 In the second part of the paper, we discuss local times and self-intersection local times that in some sense contain  information about nondeterminism of sample paths. Joint space-time regularity of the local time for a fractional Brownian motion was studied in \cite{butkovsky2023stochastic} and for Volterra--Levy processes was studied in \cite{harang2022regularity}. The regularity of the local time for stochastic equations with smooth coefficients driven by the fractional Brownian motion was considered in \cite{lou2017local}. Joint space time regularity of the occupation measure and self-intersection measure for a general Volterra--It\^o processes was studied in \cite{friesen2024regular}.
 In the mentioned works, authors give a local nondeterminism condition applicable to the class of processes that allow one to obtain the space time regularity for either the occupation measure or the self-intersection measure. The essential difference with the class of Volterra Gaussian processes considered in this paper that it does not have a local nondeterminism  property in dimensions $d\geqslant 2.$ Under some conditions on the kernel that generate a planar Volterra Gaussian process,  it behaves like a planar Wiener process on small time intervals. This implies that the self-intersection local times do not exist, and, as in the case of planar Wiener process, we must construct a renormalization. Using a white noise approach we construct the Rosen renormalized self-intersection local time for a planar Volterra Gaussian process.

\appendix 
  \section{White noise. It\^o--Wiener expansion and Fourier--Wiener transform}
  \label{sec:white_noise}
  Let us define a white noise in a real separable Hilbert space $H$ with an inner product $\inner{\cdot}{\cdot}.$ All Hilbert spaces in this paper are assumed to be real. We refer the readers to \cite{Janson1997GHS} 
  for more details. 
  \begin{definition}
  \label{def:white_noise} 
  A random functional $\xi$ on $H$  such that 
  \begin{enumerate}
  \item
  the mapping $h\to \xi(h)$ is linear
  \item
  $E\xi(h)=0\  \text{for all}\  h\in H$
  \item
   $E\xi(h)\xi(g)=\inner{h}{g}\  \text{for all}\  h,\ g\in H$
   \end{enumerate}
   is said to be a white noise in $H.$
   \end{definition}
   In the case $H=\mbR^n,\ n\geqslant 1,$ the white noise is $\xi\sim\mathcal{N}(0,I)$ and $\xi(h)=\inner{h}{\xi},\ h\in H,$ i.e., the random functional is generated by a random element in $\mbR^n$. The same is not true in case of an infinite-dimensional Hilbert space. The white noise in an infinite-dimensional separable Hilbert space does not correspond to any random element in this space. Indeed, let $H$ be a separable Hilbert space with $\dim H=\infty$ and  $\xi_n,\ n\geqslant 1$ be a sequence of independent standard Gaussian random variables. Consider an orthonormal basis $\{e_n,\ n\geqslant 1\}$ in H. Define
  $$
  \xi(h)=\sum^{\infty}_{n=1}\inner{h}{e_n}\xi_n. 
  $$
  Applying Kolmogorov's two-series theorem \cite[Theorem 1.4.2, page 37]{Stroock1993Probability}, one can verify that the series converges almost surely. It is not difficult to see that $\xi(h),\ h\in H$ is a white noise according to Definition~\ref{def:white_noise}. Assume that there exists a random element $\xi^{\prime}$ in H such that for any $h\in H$ we have $ \xi(h)=\inner{h}{\xi^{\prime}}$ almost surely. Then, 
  $$
  \norm{\xi^{\prime}}^2 \coloneq \inner{\xi^{\prime} }{\xi^{\prime}}=\sum^{\infty}_{n=1}\inner{\xi^{\prime}}{e_n}^2=\sum^{\infty}_{n=1}\xi(e_n)^2<\infty\ \text{a.s.}
  $$
  but 
  $$
  \sum^{\infty}_{n=1}\xi(e_n)^2=\sum^{\infty}_{n=1}\xi_n^2=\infty\ \text{a.s.}, 
  $$
  which is a contradiction. 
  This shows that a random functional $\xi$ on $H$ cannot be associated with any random element in $H.$ 
  Sometimes a white noise in an infinite-dimensional Hilbert space is called a generalized random element. In this paper, we use the notation $\inner{h}{\xi}$ for $\xi(h).$ Consider the white noise $\xi$ in $L_2([0,1]).$ If $W(t),\ t\in[0,1]$ is a Wiener process, then a random functional on $L_2([0,1])$ is defined as follows
  $$
  \inner{h}{\xi}=\int^1_0h(t)\differential{W(t)},\ h\in L_2([0,1]).
  $$
  Using the properties of stochastic integral, one can check that $\xi$ is a white noise in $L_2([0,1]).$ 
  
  The It\^o--Wiener  and the Fourier--Wiener transforms are useful tools for the investigation of  functionals of Wiener processes. Let $\alpha$ be a  square-integrable random variable. Assume that $\alpha$ is measurable with respect to the planar Wiener process $\mathcal{W}(t)=(W_1(t),W_2(t)),\ t\in[0,1]$.   Let us recall the definitions of the It\^o--Wiener expansion and the Fourier--Wiener transform for the random variable $\alpha.$
  \begin{theorem}[It\^o--Wiener expansion]
  \label{thm:1} 
  The random variable $\alpha$ can be uniquely represented as the convergent in mean square series of orthogonal summands
  \begin{align*} 
&{}\alpha=\sum^{\infty}_{k=0}\sum^{k}_{l=0}\int_{\Delta_k}\alpha_k(r_1,\ldots,r_k)\differential{W_1(r_1)}\ldots\differential{W_1(r_l)}\differential{W_2(r_{l+1})}\ldots\differential{W_1(r_k)},
\end{align*} 
where
$\alpha_0=\Eof{\alpha},$  kernels $\alpha_k,\ k>0$ satisfy the condition
$$
\int_{\Delta_k}\alpha^2_k(r_1,\ldots,r_k)\differential{r_1}\ldots\differential{r_l}\differential{r_{l+1}}\ldots\differential{r_k}<\infty
$$
and
$$
\Eof{\alpha^2}=\sum^{\infty}_{k=0}\sum^{k}_{l=0}\int_{\Delta_k}\alpha^2_k(r_1,\ldots,r_k)\differential{r_1}\ldots\differential{r_l}\differential{r_{l+1}}\ldots\differential{r_k}.
$$
\end{theorem}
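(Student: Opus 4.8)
The statement is the classical Wiener--It\^o chaos decomposition, here for the two-dimensional white noise $(\xi_1,\xi_2)$ generated by $\mathcal{W}=(W_1,W_2)$. My plan is to establish it in the standard three movements: build the orthogonal chaos subspaces, prove their totality in $L_2(\Omega,\sigma(\mathcal{W}),\prob)$, and then identify each chaos with the iterated integrals appearing in the statement. First I would set up the Gaussian Hilbert space. Let $\mathcal{G}\subseteq L_2(\Omega)$ be the closed linear span of $\{\inner{h}{\xi_1}+\inner{g}{\xi_2}:h,g\in L_2([0,1])\}$. By independence of $W_1,W_2$ and property (3) of Definition~\ref{def:white_noise}, the map $(h,g)\mapsto \inner{h}{\xi_1}+\inner{g}{\xi_2}$ is a linear isometry of $\mathbf{H}\coloneqq L_2([0,1])\oplus L_2([0,1])$ onto $\mathcal{G}$, so $\mathcal{G}$ is a centred Gaussian subspace isometric to $\mathbf{H}$. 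For $n\geqslant 0$ I define the $n$-th homogeneous chaos $\mathcal{H}_n$ as the $L_2$-closure of the span of the products $\prod_j H_{k_j}(\eta_j)$, where $H_m$ is the $m$-th Hermite polynomial, the $\eta_j\in\mathcal{G}$ are orthonormal, and $\sum_j k_j=n$ (equivalently $\mathcal{H}_n=\mathcal{P}_n\ominus\mathcal{P}_{n-1}$, with $\mathcal{P}_n$ the closure of polynomials of degree $\leqslant n$ in elements of $\mathcal{G}$). The orthogonality $\mathcal{H}_n\perp\mathcal{H}_m$ for $n\neq m$ is then immediate from the Hermite identity $\Eof{H_n(\eta)H_m(\zeta)}=\delta_{nm}\,n!\,\inner{\eta}{\zeta}^n$ for unit-variance jointly Gaussian $(\eta,\zeta)$.

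The crucial step is completeness: $L_2(\Omega,\sigma(\mathcal{W}),\prob)=\bigoplus_{n\geqslant0}\mathcal{H}_n$. Here I would use the Wick exponentials $\mathcal{E}(h,g)\coloneqq\myExp{\inner{h}{\xi_1}+\inner{g}{\xi_2}-\tfrac12(\norm{h}^2+\norm{g}^2)}$. On one hand, expanding the exponential gives $\mathcal{E}(h,g)=\sum_{n\geqslant0}\frac{1}{n!}\,{:}(\inner{h}{\xi_1}+\inner{g}{\xi_2})^n{:}$, a convergent series whose $n$-th term lies in $\mathcal{H}_n$, so every $\mathcal{E}(h,g)$ belongs to $\overline{\bigoplus_n\mathcal{H}_n}$. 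On the other hand, the family $\{\mathcal{E}(h,g)\}$ is total in $L_2(\Omega,\sigma(\mathcal{W}),\prob)$: if $Z\in L_2$ satisfies $\Eof{Z\,\mathcal{E}(h,g)}=0$ for all $h,g$, then $\Eof{Z\,\myExp{\sum_j a_j W_1(t_j)+\sum_j b_j W_2(t_j)}}=0$ for all real coefficients and all finite sets of times; passing to the finite-dimensional signed measure $\nu(\differential{v})=\Eof{Z\mid V=v}\,\prob_V(\differential{v})$, where $V$ collects the relevant increments, the vanishing Laplace transform (finite by Cauchy--Schwarz against the Gaussian tails) forces $\Eof{Z\mid V}=0$, and since $\sigma(\mathcal{W})$ is generated by countably many such increments a monotone-class/martingale argument yields $Z=0$. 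Combining the two facts gives $\overline{\bigoplus_n\mathcal{H}_n}=L_2$, which is exactly the mean-square convergence of the expansion; the Parseval identity $\Eof{\alpha^2}=\sum_n\norm{\operatorname{proj}_{\mathcal{H}_n}\alpha}^2$ then follows, and uniqueness of the decomposition is just uniqueness of orthogonal projections onto the $\mathcal{H}_n$.

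Finally I would translate each chaos component into the integral form of the statement. Through the Fock-space isometry $\mathcal{H}_n\cong \mathbf{H}^{\odot n}$, a symmetric kernel on $([0,1]\sqcup[0,1])^n$ splits according to how many of its $n$ arguments belong to the $W_1$-copy versus the $W_2$-copy; sorting by this count reproduces the inner sum $\sum_{l=0}^k$, and rewriting the symmetric multiple integral over the cube as an iterated integral over the ordered simplex $\Delta_k$ produces precisely $\int_{\Delta_k}\alpha_k\,\differential{W_1(r_1)}\cdots\differential{W_1(r_l)}\differential{W_2(r_{l+1})}\cdots\differential{W_2(r_k)}$, with the isometry giving $\Eof{(I_k(\alpha_k))^2}=\int_{\Delta_k}\alpha_k^2$ and hence the stated norm identity. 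I expect the main obstacle to be the totality of the Wick exponentials in the completeness step: one must argue carefully that $\sigma(\mathcal{W})$ is generated by countably many increments and that orthogonality to all $\mathcal{E}(h,g)$ forces the characteristic functional, and thus $Z$, to vanish, which is where the Laplace/Fourier-uniqueness and monotone-class machinery does the real work. By contrast, the algebraic identification in the last step is routine combinatorics carried out by comparing the chaos decomposition with the simplex-ordered iterated integrals.
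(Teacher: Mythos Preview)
The paper does not actually prove this theorem: it is stated in the appendix as a classical background result (``Let us recall the definitions of the It\^o--Wiener expansion\ldots'') with no argument supplied. Your outline is a correct and standard route to the Wiener--It\^o chaos decomposition for a two-dimensional white noise---building the Hermite chaoses, proving totality via the Wick exponentials, and then identifying $\mathcal{H}_n$ with simplex-ordered iterated integrals---so there is nothing in the paper to compare it against. Since the authors treat the result as known, a full proof is not required here; if you do include one, your plan is sound and the step you flag (totality of $\{\mathcal{E}(h,g)\}$ via Laplace-transform uniqueness and a monotone-class argument) is indeed the only place where genuine care is needed.
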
 
Consider two independent white noises $\xi_1,\ \xi_2\in L_2([0,1])$ generated by Wiener processes $W_1,\ W_2.$
  \begin{definition}[Stochastic exponential]\label{def:stochastic_exponent}
    For $h_1, h_2 \in H$, the stochastic exponential is defined as 
    \begin{align*}
      \mathcal{E}(h_1, h_2) = \exp\left(\inner{h_1}{ \xi_1} + \inner{h_2}{ \xi_2} - \frac{1}{2} \left(\norm{h_1}^2 + \norm{h_2}^2 \right)\right).
    \end{align*}
  \end{definition}
Since $\alpha$ is a square-integrable and measurable with respect to $\xi_1,\ \xi_2,$ then one can consider its Fourier-Wiener transform defined as follows.
  \begin{definition}[Fourier--Wiener transform]\label{def:fourier_wiener}
    The Fourier--Wiener transform of the random  variable $\alpha$ is defined as 
    \begin{align*}
      \mathcal{T}(\alpha)(h_1, h_2) = \Eof{\alpha \mathcal{E}(h_1, h_2)}.
    \end{align*}
    
  \end{definition}

  \section{Gaussian integrators}
  \label{sec:integrators}
   The following statement describes the structure of all integrators. 
  \begin{lemma}[Structure of integrators \cite{Izyumtseva2014Onthelocaltime}]
  \label{lem:1}
  The Gaussian process $x$ is an integrator iff it can be represented as
  \begin{equation}
  \label{eq1.2}
  x(t)=\inner{A1_{[0, t]}}{\xi}, \ t\in[0,1]
  \end{equation}
  for  some continuous linear operator $A$ on $L_2([0,1])$ and a white noise $\xi$ in the same space.
  \end{lemma}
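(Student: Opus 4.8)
The plan is to prove the two implications separately. The forward direction (representation $\Rightarrow$ integrator) is a one-line computation, and the converse --- the substantive part --- amounts to coordinatizing the Gaussian span of $x$ by a white noise, so that the increment map of $x$ becomes ``apply a bounded operator, then evaluate against the noise.''

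\emph{Representation $\Rightarrow$ integrator.} Assume $x(t)=\inner{A1_{[0,t]}}{\xi}$ for a bounded linear operator $A$ on $L_2([0,1])$ and a white noise $\xi$. For any $0=t_0<\dots<t_n=1$ and $a_0,\dots,a_{n-1}\in\mbR$, linearity of $A$ together with $1_{[0,t_{k+1}]}-1_{[0,t_k]}=1_{(t_k,t_{k+1}]}$ gives $\sum_{k}a_k(x(t_{k+1})-x(t_k))=\inner{Ag}{\xi}$ with $g:=\sum_k a_k 1_{(t_k,t_{k+1}]}$. By the defining property of white noise (Definition~\ref{def:white_noise}), $\E\inner{Ag}{\xi}^2=\norm{Ag}^2\le\norm{A}_{\mathrm{op}}^2\norm{g}^2$, and since the indicators $1_{(t_k,t_{k+1}]}$ are pairwise orthogonal in $L_2([0,1])$, $\norm{g}^2=\sum_k a_k^2(t_{k+1}-t_k)$. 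This is exactly \eqref{eq1.1} with $c=\norm{A}_{\mathrm{op}}^2$.

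\emph{Integrator $\Rightarrow$ representation.} Taking $n=1$ and two-point partitions in \eqref{eq1.1} yields $\E(x(t)-x(s))^2\le c\,|t-s|$, so $x$ is continuous in mean square and its closed linear span $\mathcal{H}_x\subseteq L_2(\Omega)$ is a separable Gaussian Hilbert space. On the dense subspace of step functions in $L_2([0,1])$ I would define $T_0\big(\sum_k b_k 1_{(s_k,s_{k+1}]}\big):=\sum_k b_k\big(x(s_{k+1})-x(s_k)\big)$; a common-refinement argument shows $T_0$ is well defined and linear into $\mathcal{H}_x$, while \eqref{eq1.1} is precisely the bound $\E(T_0 h)^2\le c\norm{h}^2$, so $T_0$ extends to a bounded operator $T\colon L_2([0,1])\to\mathcal{H}_x$ with $T1_{[0,t]}=x(t)$. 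Next, I would fix an orthonormal basis $\{e_n\}$ of $L_2([0,1])$ and an orthonormal basis $\{\eta_n\}$ of $\mathcal{H}_x$ --- an i.i.d.\ standard Gaussian family --- enlarging the probability space if necessary so that this basis is countably infinite, and set $\inner{h}{\xi}:=\sum_n\inner{h}{e_n}\eta_n$. One checks, using Kolmogorov's two-series theorem for convergence, that $\xi$ is a white noise in $L_2([0,1])$ and that $I\colon h\mapsto\inner{h}{\xi}$ is an isometric isomorphism onto the ambient Gaussian space. Then $A:=I^{-1}\circ T$ is a bounded operator on $L_2([0,1])$ and $\inner{A1_{[0,t]}}{\xi}=I(A1_{[0,t]})=I\big(I^{-1}(T1_{[0,t]})\big)=T1_{[0,t]}=x(t)$, which is \eqref{eq1.2}.

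The genuinely delicate point is the construction and boundedness of $T$: one must check that $T_0$ does not depend on how a step function is written as a linear combination of interval indicators (handled by passing to a common refinement and telescoping the increments) and that its range stays inside the closed subspace $\mathcal{H}_x$; after that, \eqref{eq1.1} hands the bounded extension to us for free. Everything else --- density of step functions, convergence of the white-noise series, and the Hilbert-space bookkeeping for $I$ and $A$ --- is routine. A minor caveat: when $\mathcal{H}_x$ is finite-dimensional it cannot accommodate an infinite-dimensional white noise, so one first passes to an enlarged probability space carrying extra independent standard Gaussians; this is harmless since the statement only asks for the existence of \emph{some} white noise $\xi$.
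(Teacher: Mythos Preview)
Your argument is correct. The paper does not actually supply its own proof of this lemma --- it simply cites \cite{Izyumtseva2014Onthelocaltime} --- so there is nothing to compare against in the text itself; what you have written is the standard construction and is essentially what one finds in the cited reference: bound the increment map on step functions using \eqref{eq1.1}, extend by density to a bounded operator into the Gaussian span, then factor through an isometry onto a white noise.
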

  The proof of 
  Lemma \ref{lem:1} can be found in \cite{Izyumtseva2014Onthelocaltime}.

  \section{Additional results related to Gaussian processes}
  \label{sec:AppendixC}
  \subsection{Tail probabilities of Gaussian extrema.}
  \label{subsec:Gaussian tails}
  \begin{lemma}[Tail estimate for Gaussian extrema (\cite{Samorodnitsky1991tails})]
  \label{lem:6} Assume that $x(t),\ t\in T$ is a zero-mean separable Gaussian process with a parameter set $T.$ Then, for each $\delta>0$ there is a positive constant $k(\delta)<\infty$ such that for any $\lambda>0$, we have 
  \begin{equation}
  \label{eq6.1}
  \prob(\sup_{t\in T}x(t)>\lambda)\leqslant k(\delta) \myExp{-\frac{(1-\delta)\lambda^2}{2\sigma^2}},
  \end{equation}
  where 
  $$
  \sigma^2:=\sup_{t\in T}\Var(x(t)).
  $$
  \end{lemma}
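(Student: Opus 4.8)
The plan is to deduce the tail bound \eqref{eq6.1} from the Borell--Tsirelson--Ibragimov--Sudakov (Borell--TIS) Gaussian concentration inequality, followed by an elementary completion of the square. Throughout, write $M \coloneq \sup_{t\in T}x(t)$ and $m\coloneq\Eof{M}$, and assume without loss of generality that $\delta\in(0,1)$, since for $\delta\geqslant1$ the right-hand exponent in \eqref{eq6.1} is nonnegative and the bound is trivial with $k(\delta)=1$. First I would record that the estimate carries content only when $M<\infty$ almost surely (otherwise the left-hand side equals $1$ for all $\lambda$), which is precisely the regime in which the lemma is applied, e.g.\ to $T=[0,q^n]$ with a continuous process. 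Under $M<\infty$ a.s.\ together with $\sigma^2=\sup_{t\in T}\Var(x(t))<\infty$, the Fernique--Landau--Shepp theorem guarantees $m<\infty$ (indeed $M$ then has sub-Gaussian tails). Note also that $m\geqslant0$, since $M\geqslant x(t_0)$ and $\Eof{x(t_0)}=0$.

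Next I would invoke the \emph{Borell--TIS inequality}: for a separable centred Gaussian process with $\sigma^2<\infty$ and $m<\infty$,
$$
\probOf{M-m>u}\leqslant\myExp{-\frac{u^2}{2\sigma^2}},\qquad u>0.
$$
I would establish this from the Gaussian isoperimetric inequality. Representing $x$ as a Gaussian series $x(t)=\sum_n a_n(t)\gamma_n$ with $(\gamma_n)$ i.i.d.\ $\mathcal N(0,1)$ and $\sum_n a_n(t)^2=\Var(x(t))$, each map $\gamma\mapsto\sum_n a_n(t)\gamma_n$ is linear on $\ell_2$ with Lipschitz constant $\sqrt{\Var(x(t))}\leqslant\sigma$; hence the supremum $\gamma\mapsto M$ is $\sigma$-Lipschitz on $\ell_2$, and Gaussian concentration for Lipschitz functionals of the canonical Gaussian measure yields the displayed bound.

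Finally I would carry out the algebraic step. For $\lambda>m$ the concentration bound gives
$$
\probOf{M>\lambda}=\probOf{M-m>\lambda-m}\leqslant\myExp{-\frac{(\lambda-m)^2}{2\sigma^2}}.
$$
Completing the square, the inequality $(\lambda-m)^2\geqslant(1-\delta)\lambda^2$ is equivalent to $\delta\lambda^2-2m\lambda+m^2\geqslant0$, whose larger root is $\lambda_0(\delta)\coloneq m(1+\sqrt{1-\delta})/\delta$; thus it holds for all $\lambda\geqslant\lambda_0(\delta)$, and one checks $\lambda_0(\delta)\geqslant m$, so concentration applies there. Consequently $\probOf{M>\lambda}\leqslant\myExp{-(1-\delta)\lambda^2/(2\sigma^2)}$ for $\lambda\geqslant\lambda_0(\delta)$, while for $0<\lambda<\lambda_0(\delta)$ the trivial bound $\probOf{M>\lambda}\leqslant1$ combined with $\myExp{(1-\delta)\lambda^2/(2\sigma^2)}\leqslant\myExp{(1-\delta)\lambda_0(\delta)^2/(2\sigma^2)}$ covers the small-$\lambda$ range. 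Taking
$$
k(\delta)\coloneq\myExp{\frac{(1-\delta)\lambda_0(\delta)^2}{2\sigma^2}},
$$
which depends only on $\delta$ and the fixed quantities $m,\sigma$ of the process, yields \eqref{eq6.1} for every $\lambda>0$. The main obstacle is the concentration step, namely establishing Borell--TIS (equivalently, invoking Gaussian isoperimetry) and securing $m<\infty$; once concentration is in hand, the passage to the $(1-\delta)\lambda^2$ form is routine.
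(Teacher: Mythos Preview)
The paper does not supply its own proof of this lemma; it is quoted in Appendix~\ref{subsec:Gaussian tails} as a result from \cite{Samorodnitsky1991tails} and used as a black box in the proof of Theorem~\ref{thm:LIL}. Your derivation via the Borell--TIS concentration inequality is a correct and standard route to such a bound: the concentration step gives the sub-Gaussian tail centred at $m=\Eof{M}$, and the completion-of-the-square correctly absorbs the shift $m$ into the $(1-\delta)$ factor at the price of a multiplicative constant for small $\lambda$. The caveat that one needs $M<\infty$ a.s.\ (hence $m<\infty$) for Borell--TIS to apply is appropriate and matches the regime in which the lemma is actually invoked.

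One point deserves attention. Your constant $k(\delta)=\exp\bigl((1-\delta)\lambda_0(\delta)^2/(2\sigma^2)\bigr)$ depends on the process through the ratio $m/\sigma$, as you acknowledge. The lemma as stated is read most naturally with the process fixed, so this is legitimate; however, in the paper's application it is applied to the \emph{family} of restrictions $X|_{[0,q^n]}$, $n\geqslant N$, with a single $k(\delta)$ used uniformly in $n$. Your bound still suffices there because $m_n/\sigma_n$ stays bounded (both $m_n$ and $\sigma_n$ scale like $q^{n/2}$ for a process with Brownian-like small-time increments, the former by a Dudley-type entropy estimate), but strictly speaking this uniformity is an additional check your argument requires and the cited result does not. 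In short: your proof is correct, slightly weaker in the form of the constant than what the reference provides, and adequate for the paper's purposes once the uniform control of $m_n/\sigma_n$ is noted.
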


\subsection{Local times}
\label{subsec:local_times}
\begin{lemma}[Rudenko's criterion (\cite{rudenko2006existence})]
  \label{lem:Rudenko} Let $x(t),\ t\in[0,1]$ be an $\mbR$-valued Gaussian process and let $\Cov(x(s),x(t))$ be the covariance matrix corresponding to the Gaussian vector $(x(s),x(t))$. If
  \begin{equation}
  \label{eq6.6} 
  \int^1_0\int^1_0\frac{1}{\left(\det\Cov(x(s),x(t))\right)^{\frac{1}{2}}}\differential{s}\differential{t}<\infty,
  \end{equation}
  then the local time of  $x$ exists at $0$ on $[0,1],$ and hence, the local time of $x$ exists at any $y\in\mbR$ on $[0,1].$  
\end{lemma}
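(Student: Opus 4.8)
The plan is to prove existence of the local time directly from Definition~\ref{def:local_time}, i.e.\ to show that the $L_2$-limit $\llim{\ve\to0} l^x_\ve(1,0)$ exists, where $l^x_\ve(1,0)=\int^1_0 f_{\ve,0}(x(s))\differential{s}$. Since $L_2$ is complete, it suffices to verify that the family $\{l^x_\ve(1,0)\}_{\ve>0}$ is Cauchy as $\ve\downarrow 0$, and for this I would compute the cross moments $\E\left(l^x_\ve(1,0)\,l^x_{\ve'}(1,0)\right)$ and show they converge to a common finite limit. The principal tool is the Fourier representation of the Gaussian kernel,
$$
f_{\ve,0}(z)=\frac{1}{2\pi}\int_{\mbR}\myExp{-ipz-\tfrac{1}{2}\ve p^2}\differential{p},
$$
which turns products of $f_{\ve,0}(x(\cdot))$ into exponentials of linear functionals of the Gaussian process, whose expectations are explicit.

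First I would write, using Fubini and this representation,
$$
\E\left(l^x_\ve(1,0)\,l^x_{\ve'}(1,0)\right)=\frac{1}{(2\pi)^2}\int^1_0\!\!\int^1_0\!\int_{\mbR}\!\int_{\mbR}\E\left(\myExp{-ipx(s)-iqx(t)}\right)\myExp{-\tfrac{1}{2}\ve p^2-\tfrac{1}{2}\ve' q^2}\differential{p}\differential{q}\differential{s}\differential{t}.
$$
Since $(x(s),x(t))$ is centred Gaussian, $\E\left(\myExp{-ipx(s)-iqx(t)}\right)=\myExp{-\frac12\inner{\Sigma_{s,t}(p,q)^{\!\top}}{(p,q)^{\!\top}}}$ with $\Sigma_{s,t}\coloneq\Cov(x(s),x(t))$, so the inner integral over $(p,q)$ is Gaussian with quadratic form $\Sigma_{s,t}+\mathrm{diag}(\ve,\ve')$. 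As $\ve,\ve'\downarrow 0$ the regularising factor increases to $1$, so by monotone convergence this inner integral converges pointwise, for each $(s,t)$ with $\det\Sigma_{s,t}>0$, to $\int_{\mbR^2}\myExp{-\frac12\inner{\Sigma_{s,t}(p,q)^{\!\top}}{(p,q)^{\!\top}}}\differential{p}\differential{q}=2\pi(\det\Sigma_{s,t})^{-1/2}$.

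The decisive step is interchanging this limit with the outer integration over $(s,t)$. Because the regularising factor is always $\leqslant 1$, the inner integral is dominated, uniformly in $\ve,\ve'$, by $2\pi(\det\Sigma_{s,t})^{-1/2}$, which is integrable over $[0,1]^2$ precisely by hypothesis~\eqref{eq6.6}; note that \eqref{eq6.6} also forces $\det\Sigma_{s,t}>0$ for a.e.\ $(s,t)$. Dominated convergence then gives
$$
\lim_{\ve,\ve'\downarrow0}\E\left(l^x_\ve(1,0)\,l^x_{\ve'}(1,0)\right)=\frac{1}{2\pi}\int^1_0\!\!\int^1_0\frac{1}{(\det\Sigma_{s,t})^{1/2}}\differential{s}\differential{t}\eqcolon C<\infty,
$$
independently of how $\ve,\ve'$ tend to zero. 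Consequently $\E\left((l^x_\ve(1,0)-l^x_{\ve'}(1,0))^2\right)=\E\left(l^x_\ve(1,0)^2\right)-2\E\left(l^x_\ve(1,0)\,l^x_{\ve'}(1,0)\right)+\E\left(l^x_{\ve'}(1,0)^2\right)\to C-2C+C=0$, so the family is Cauchy and the local time at $0$ exists.

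For existence at an arbitrary $y\in\mbR$ I would repeat the computation with $f_{\ve,y}(z)=\frac{1}{2\pi}\int_{\mbR}\myExp{-ip(z-y)-\frac12\ve p^2}\differential{p}$. The only change is an extra phase $\myExp{i(p+q)y}$ inside the $(p,q)$ integral; its modulus is $1$, so the same integrable dominating function $2\pi(\det\Sigma_{s,t})^{-1/2}$ controls the integrand and dominated convergence applies verbatim, the limit now carrying a harmless factor $\myExp{-\frac12 y^2\inner{\Sigma_{s,t}^{-1}(1,1)^{\!\top}}{(1,1)^{\!\top}}}\leqslant1$. I expect the main obstacle to be exactly this interchange of limits near the diagonal $s=t$, where $\det\Sigma_{s,t}\to0$ and the integrand blows up: the whole role of condition~\eqref{eq6.6} is to make this singularity integrable, which is what legitimises the dominating function and makes the argument go through.
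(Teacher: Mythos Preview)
The paper does not give its own proof of this lemma: it is quoted from \cite{rudenko2006existence} and placed in the appendix purely for reference, so there is nothing to compare your argument against. Your proof is correct and is essentially the standard one: write the approximating local time via the Fourier representation of the Gaussian kernel, compute the cross second moments as Gaussian integrals over $(p,q)$ with covariance $\Sigma_{s,t}+\mathrm{diag}(\ve,\ve')$, observe that these are dominated uniformly in $\ve,\ve'$ by $2\pi(\det\Sigma_{s,t})^{-1/2}$, and invoke hypothesis~\eqref{eq6.6} to apply dominated convergence and conclude the $L_2$-Cauchy property. The extension to general $y$ via the unimodular phase factor is likewise fine, and your remark that \eqref{eq6.6} forces $\det\Sigma_{s,t}>0$ for almost every $(s,t)$ is the right way to dispose of the null set where the pointwise limit would fail. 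One small point you leave implicit: the initial use of Fubini to pass the expectation inside the four-fold integral is justified because for each fixed $\ve,\ve'>0$ the integrand is bounded in modulus by $e^{-\frac12\ve p^2-\frac12\ve' q^2}$, which is integrable over $[0,1]^2\times\mbR^2$.
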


  \section{Additional mathematical results}
  \label{appendix:extra}

  \begin{lemma}[An extension of the Borel--Cantelli lemma \cite{Petrov2002note}]
  \label{lem:GBCL}  
  Let $B_n,\ n\geqslant 1$ be a sequence of events satisfying conditions
  \begin{enumerate}
  \item $\sum^{\infty}_{n=1}\prob(B_n)=\infty$
  \item 
  $
  \prob(B_n\cap B_m)\leqslant C \prob(B_n)\cdot \prob(B_m)
  $
  for all $n,m>N$ such that $n\neq m$ and for some constants $C>1$ and a natural number $N.$
  \end{enumerate}
  Then,
  $$
  \prob(B_n\ \text{i.o.})\geqslant \frac{1}{C}.
  $$
   \end{lemma}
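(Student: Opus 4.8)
The plan is to establish the inequality by the second-moment method, in the spirit of the Kochen--Stone lemma, namely a reverse Cauchy--Schwarz (Paley--Zygmund) estimate applied to tail counters. The crucial design choice is to apply the pairwise hypothesis (2) only to index pairs that both exceed $N$, so that the finitely many ``boundary'' pairs never enter the estimate. First I would set up notation: write $T_n=\sum_{k=1}^{n}\probOf{B_k}$, and for integers $N\le m<n$ introduce the tail counter $S_{m,n}=\sum_{k=m+1}^{n}\1_{B_k}$, so that $\Eof{S_{m,n}}=T_n-T_m$ and $\Eof{S_{m,n}^2}=\sum_{j,k=m+1}^{n}\probOf{B_j\cap B_k}$. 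The reduction I would use is that $\{B_n\ \text{i.o.}\}=\bigcap_{m\ge 0}\bigcup_{k>m}B_k$, and since the sets $\bigcup_{k>m}B_k$ decrease in $m$, continuity of the measure gives $\probOf{B_n\ \text{i.o.}}=\inf_{m\ge 0}\probOf{\bigcup_{k>m}B_k}$; hence it suffices to prove $\probOf{\bigcup_{k>m}B_k}\ge 1/C$ for every $m\ge N$.

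Next I would invoke the elementary second-moment inequality: for any nonnegative square-integrable random variable $Z$ with $\Eof{Z}>0$, applying Cauchy--Schwarz to the identity $Z=Z\,\1_{\{Z>0\}}$ yields $\Eof{Z}^2\le \Eof{Z^2}\,\probOf{Z>0}$, that is $\probOf{Z>0}\ge \Eof{Z}^2/\Eof{Z^2}$. Taking $Z=S_{m,n}$ and noting that, since $S_{m,n}$ is integer-valued, $\{S_{m,n}>0\}=\bigcup_{k=m+1}^{n}B_k$, I obtain
\[
\probOf{\bigcup_{k=m+1}^{n}B_k}\ \ge\ \frac{(T_n-T_m)^2}{\sum_{j,k=m+1}^{n}\probOf{B_j\cap B_k}}.
\]

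Now I would bound the denominator. Because $m\ge N$, every index $k$ in the tail satisfies $k>N$, so hypothesis (2) applies to all off-diagonal pairs: $\sum_{j\ne k}\probOf{B_j\cap B_k}\le C\sum_{j\ne k}\probOf{B_j}\probOf{B_k}\le C\bigl(\sum_{k=m+1}^{n}\probOf{B_k}\bigr)^2=C(T_n-T_m)^2$, while the diagonal contributes exactly $\sum_{k=m+1}^{n}\probOf{B_k}=T_n-T_m$. Thus the denominator is at most $C(T_n-T_m)^2+(T_n-T_m)$, giving
\[
\probOf{\bigcup_{k=m+1}^{n}B_k}\ \ge\ \frac{(T_n-T_m)^2}{C(T_n-T_m)^2+(T_n-T_m)}.
\]
Letting $n\to\infty$, hypothesis (1) forces $T_n-T_m\to\infty$, so the right-hand side tends to $1/C$; since the left-hand side increases to $\probOf{\bigcup_{k>m}B_k}$, I conclude $\probOf{\bigcup_{k>m}B_k}\ge 1/C$ for every $m\ge N$, and the monotone-limit identity from the first paragraph yields $\probOf{B_n\ \text{i.o.}}\ge 1/C$.

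The main obstacle, which the argument circumvents by construction, is the presence of the finitely many pairs for which the bound in hypothesis (2) need not hold (those with an index $\le N$, together with the full diagonal). Restricting to tails $\bigcup_{k>m}B_k$ with $m\ge N$ eliminates the small-index pairs entirely, and hypothesis (1) guarantees that the unavoidable diagonal term $T_n-T_m$ is asymptotically negligible against $(T_n-T_m)^2$. This is exactly what drives the ratio to the sharp value $1/C$; without the divergence in (1) the diagonal would not wash out and one would obtain only a weaker, non-sharp lower bound.
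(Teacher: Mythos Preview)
Your argument is correct and is the standard second-moment (Kochen--Stone/Erd\H{o}s--R\'enyi) proof: apply Cauchy--Schwarz to the tail counter $S_{m,n}$, use hypothesis~(2) only on off-diagonal pairs with both indices exceeding $N$, and let hypothesis~(1) absorb the diagonal. The steps are all sound; in particular, your handling of the finitely many exceptional indices by restricting to $m\ge N$ is exactly the right device, and the passage to the limit via monotonicity of $\bigcup_{k>m}B_k$ is clean.

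Note, however, that the paper does not supply its own proof of this lemma: it is stated in the appendix with a citation to Petrov~\cite{Petrov2002note} and used as a black box in the proof of Theorem~\ref{thm:LIL}. Your proof is essentially the one found in that reference (and in the earlier Kochen--Stone literature), so there is nothing to contrast methodologically---you have reproduced the standard argument that the citation points to.
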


   \begin{lemma}
    \label{lem:Shur_test}[Shur test \cite{halmosh1987bounded}]  Assume that $a$ is a nonnegative kernel on $[0,1]^2$.
    If there exists measurable functions $p,q:[0,1]\to(0,\infty)$ and positive constants $\alpha,\ \beta$ such that
    $$
    \int^{1}_{0}a(s_1,s_2)q(s_2)\differential{s_2}\leqslant \alpha p(s_1) 
    $$
    for almost all $s_1$ and
    $$
    \int^{1}_{0}a(s_1,s_2)p(s_1)\differential{s_1}\leqslant \beta p(s_2)
    $$
    for almost all $s_2$, then $a$ induces a bounded operator $A$ on $L_2([0,1])$ such that
    $$
    \|A\|^2\leqslant\alpha\beta.
    $$
    \end{lemma}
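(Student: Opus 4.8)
The plan is to read off the Itô--Wiener kernels directly from the Fourier--Wiener transform supplied by Theorem~\ref{thm:Fourier_Wiener}. The guiding principle is that the Fourier--Wiener transform determines a square-integrable functional of $(\xi_1,\xi_2)$ uniquely, and that on a single ordered multiple Wiener integral it reduces to a deterministic pairing: combining Definition~\ref{def:fourier_wiener}, the factorised chaos expansion of the stochastic exponential $\mathcal{E}(h_1,h_2)$ from Definition~\ref{def:stochastic_exponent}, and the It\^o isometry, one obtains for any square-integrable kernel $\gamma$ on $\Delta_{2n}$ the identity
$$
\mathcal{T}\Big(\int_{\Delta_{2n}}\gamma \,\differential{W_1(s_1)}\cdots\differential{W_1(s_{2p})}\,\differential{W_2(s_{2p+1})}\cdots\differential{W_2(s_{2n})}\Big)(h_1,h_2) = \int_{\Delta_{2n}}\gamma\, h_1^{\bigotimes 2p}\, h_2^{\bigotimes 2n-2p}.
$$
Since Theorem~\ref{thm:1} guarantees uniqueness of the kernels in an Itô--Wiener expansion, it suffices to expand the convergent integral of Theorem~\ref{thm:Fourier_Wiener} into a sum of such pairings; the coefficient multiplying $h_1^{\bigotimes 2p}h_2^{\bigotimes 2n-2p}$ in the block of chaos order $2n$ is then, by inspection, the kernel $\alpha_{2n}$.

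The expansion proceeds in elementary steps. First, because $\tilde{\Delta} g(t_1),\ldots,\tilde{\Delta} g(t_{k-1})$ are orthogonal, each projection norm collapses to a rank-one pairing, $\norm{P_{\tilde{\Delta} g(t_i)}h}^2 = \norm{\tilde{\Delta} g(t_i)}^{-2}\inner{h}{\tilde{\Delta} g(t_i)}^2$. Next I would expand each factor of the integrand through the series $\myExp{x}-1=\sum_{m\geqslant1}x^m/m!$, apply the binomial theorem to separate the $h_1$ and $h_2$ contributions in each power $(\norm{P h_1}^2+\norm{P h_2}^2)^m$, and convert the product over $i=1,\ldots,k-1$ into sums over the multi-indices $m\in\Lambda_{+}(\cdot,k-1)$ (strict positivity coming from the discarded $m_i=0$ term of $\myExp{x}-1$) and $l\in\Lambda(\cdot,k-1)$ (with $l_i=0$ admitted by the binomial split). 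Finally, each scalar power is rewritten as a tensor pairing, $\inner{h}{\tilde{\Delta} g(t_i)}^{2a}=\int (\tilde{\Delta} g(t_i))^{\bigotimes 2a}(s_1,\ldots,s_{2a})\prod_j h(s_j)\,\differential{s_j}$, and the factors $\norm{\tilde{\Delta} g(t_i)}^{-2}$ are absorbed into the kernel. Collecting all terms of a fixed total chaos order together with a fixed split between $W_1$ and $W_2$ integrations yields exactly the kernels $\alpha_{2n}$ and $\beta_{2n}^{m,l}$ in the statement.

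Two points require care beyond the bookkeeping. The interchange of the infinite sum with the integral over $\Delta_k$ must be justified, and this follows from absolute convergence, which in turn rests on the uniform bound $\int_{\Delta_k}\prod_{i=1}^{k-1}\norm{\tilde{\Delta} g(t_i)}^{-2}\inner{h}{\tilde{\Delta} g(t_i)}^2\,\prod_{i=1}^{k}\differential{t_i}\leqslant c\norm{h}^{2k}$ established in the proof of Theorem~\ref{thm:Fourier_Wiener}. The same estimate, applied with $h$ running over an orthonormal basis, controls $\sum_{n,p}\int_{\Delta_{2n}}\alpha_{2n}^2$ and thereby certifies that the series produced is the Itô--Wiener expansion of a genuine $L_2$ random variable, namely the $L_2$-limit defining $\tilde{T}^Y_k$; the passage from the transform of the approximations $L_{\ve,k}^{Y}$ to that of their limit uses the continuity of $\mathcal{T}(\cdot)(h_1,h_2)$ on $L_2$.

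I expect the main obstacle to be the combinatorial accounting in the middle step: keeping the tensor arguments $s_1,\ldots,s_{2l_i}$ and $s_{2l_i+1},\ldots,s_{2m_i-2l_i}$ aligned across the product over $i$, tracking the binomial coefficients $\binom{m_i}{l_i}$ and the factorials $1/m_i!$, and reconciling the outer summation index $n$ with the constraint on $\sum_i m_i$ so that the $h_1$-block has size $2p$ and the $h_2$-block size $2n-2p$. Everything else is routine once the pairing identity and the orthogonality reduction are in place.
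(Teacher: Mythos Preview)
Your proposal does not address the stated lemma at all. The statement to be proved is the Shur test: a criterion for boundedness of an integral operator on $L_2([0,1])$ in terms of auxiliary functions $p,q$ and constants $\alpha,\beta$. Your write-up is instead a sketch of the proof of Lemma~\ref{lem:It\^o--Wiener expansion}, the It\^o--Wiener expansion of the Rosen renormalized self-intersection local time. Nothing in your argument---Fourier--Wiener transforms, chaos expansions, the orthogonalised increments $\tilde{\Delta} g(t_i)$, the combinatorics of $\Lambda_{+}(n,k-1)$---bears on the statement about the kernel $a$, the functions $p,q$, or the operator bound $\|A\|^2\leqslant\alpha\beta$.

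For the record, the paper does not supply its own proof of Lemma~\ref{lem:Shur_test}; it is stated as a classical result with a citation to \cite{halmosh1987bounded}. The standard proof is a two-line application of the Cauchy--Schwarz inequality with weight: for $f\in L_2([0,1])$ one writes $|(Af)(s_1)|\leqslant\int a(s_1,s_2)^{1/2}q(s_2)^{1/2}\cdot a(s_1,s_2)^{1/2}q(s_2)^{-1/2}|f(s_2)|\,\differential{s_2}$, applies Cauchy--Schwarz, uses the first hypothesis to bound the $q$-integral by $\alpha p(s_1)$, then integrates in $s_1$ and invokes the second hypothesis. If you intended to prove Lemma~\ref{lem:It\^o--Wiener expansion}, your outline is essentially the paper's own argument for that lemma; but as a proof of the Shur test it is simply off target.
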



\section*{Acknowledgements}
OI was  supported by the British Academy. WRKB was supported by the Engineering and Physical Sciences Research Council (EPSRC) [grant number EP/Y027795/1] of UK Research and Innovation (UKRI). He would also like to thank the Isaac Newton Institute for Mathematical Sciences (INI), Cambridge (EPSRC [grant number EP/R014604/1]) for the support and hospitality during an INI Retreat programme when part of the work was undertaken. 


\bibliographystyle{amsplain}
\bibliography{refs}

\end{document}